\shorttitle{Schatten $p$ Norm} 
\newcommand{\ostar}{\textcircled{$\star$}}
\newcommand{\pnCycles}{\Sigma^{p,n}_{{\scriptscriptstyle{\uparrow}}}}
\newcommand{\EX}{\mathbb{E}}   
\newcommand{\eV}{{\rm Eval}}
\newcommand{\ostr}{{\rm\ostar}}
\newcommand{\tr}{\mbox{\sf Tr}}
\newcommand{\Real}{\mathbb R}
\newcommand{\Nat}{\mathbb N}
\newcommand{\bS}{{\bf S}}               
\newcommand{\bX}{{\bf X}}               
\newcommand{\bk}{{\bf k}}
\newcommand{\bm}{{\bf m}}
\newcommand{\al}{\alpha}                
\newcommand{\Lam}{\Lambda}               
\begin{document}

\title{On The Variance of Schatten $p$-Norm Estimation with Gaussian Sketching Matrices}

\authorone[IBM Research]{Lior Horesh} 
\authortwo[IBM Research]{Vasileios Kalantzis}
\authorthree[IBM Research]{Yingdong Lu}
\authorfour[IBM Research]{Tomasz Nowicki}

\addressone{1101 Kitchawan Rd, Yorktown Heights, NY 10598, U.S.A.} 
\emailone{\{
lhoresh,yingdong,tnowicki\}@us.ibm.com and 
\{vkal\}@ibm.com} 

\begin{abstract}
Monte Carlo matrix trace estimation is a popular randomized technique to estimate the trace of implicitly-defined matrices via averaging quadratic forms across several observations of a random vector. The most common approach to analyze the quality 
of such estimators is to consider the variance over the total number of observations. 
In this paper we present a procedure to compute the variance of the estimator proposed 
by Kong and Valiant [\emph{Ann. Statist.} 45 (5), pp. 2218 - 2247] for the case of Gaussian random vectors and provide a sharper bound than previously available.
\end{abstract}

\keywords{Schatten norms, Gaussian sketching, variance estimation}

\ams{60-08}{65C05; 65F35}   

\section{Introduction}
\label{sec:intro}

For an integer $p>1$ and a general $d\times m$ matrix $B$, the Schatten $p$-norm (also known as Schatten-von Neumann $p$-norm) is defined as,
\begin{equation*}
\|B\|_{p} : = \left(\sum\limits_{j=1}^{ d\wedge m} \sigma_{j,B}^p\right)^{1/p}= 
\left(\tr[( BB^\top  )^{p/2}]\right)^{1/p},
\end{equation*}
where $d\wedge m :=\min \{d,m\}$ and $\sigma_{j,B}$ denotes the $j$-th singular value of the matrix $B$.  The Schatten $p$-norm is a generalization of the Frobenius norm ($p=2$) and holds an important role in statistics and optimization, see e.g.~\cite{7539605},~\cite{Martinsson_Tropp_2020}, \cite{kalantzis2024asynchronous},  and~\cite{10.1214/16-AOS1525}. An unbiased estimator of general Schatten $p$-norm was presented in~\cite{10.1214/16-AOS1525} along with a bound on its variance. The rationale  behind this estimator is based on the fact that, for a random \emph{isotropic} vector $w\in \Real^d$ (i.e., $\EX[ww^\top]=I_d$), we have 
\begin{align}\label{eqn:sampletrace}\EX[w^\top(Aw)]=\tr(A).\end{align} 
As an extension, for any $d\times m$ matrix $B$, $\EX[w^\top B^\top B w]=\tr[B^\top B  ]=\|B\|_F^2$. Hence, a sample mean of $w^\top B ^\top B w$ is an unbiased estimator of $\|B\|_F^2$. For general integers $p>1$, the following quantity is an unbiased estimator of $\|B\|^{2p}_{2p}$, for some integer\footnote{A recommended lower bound for the value of $n$ can be computed via exploiting the work in \cite{li2014sketching}.} $n\ge p$, 
\begin{align*}
V^p_n = {\binom{n}{p}}^{-1}
\sum_{1\le i_1< i_2< \ldots <i_p \le n} (W)_{i_1i_2} (W)_{i_2i_3}\ldots (W)_{i_pi_1}\,,
\end{align*}
with $(W)_{ij} =w_i^\top B^\top B w_j$ and $w_i,w_j\in\Real^d$ independent for $i\neq j$.
Further details on estimating the Schatten $p$-norm can be found in \cite{martinsson2020randomized} while a Monte Carlo algorithm can be found in \cite{etna_vol55_pp213-241}. 
The key for assessing the quality of such an estimator is to have an accurate estimation 
of the variance of~$V^p_n$.  In Proposition 4 of~\cite{10.1214/16-AOS1525}, the variance 
of $V^p_n$ exploits vectors $w$ whose marginal fourth moments are bounded by a positive constant $\kappa$, leading to the upper bound
\begin{align}
\label{eqn:KongValiantBound}
2^{12p}p^{6p}\kappa^p\max\left(\frac{d^{p-2}}{n^p}, \frac{d^{\frac12-\frac{1}{p}}}{n}\right)\tr^2((B^\top B)^p).
\end{align}

The main result of this paper is the computation of an improved upper bound in the case that $w$ is Gaussian. To obtain an explicit expression of the variance of $V_n^p$  we consider an $n\times d$ \emph{sketching matrix} $\bX=(X_1, X_2, \ldots, X_n)^\top$  where each row $X^\top_i,i=1,2, \ldots n$, is an independent sample of the $d$-variate normal variable $N(0, I_d)$, and denote $\bS:=B^\top B$, which is a $d\times d$ symmetric matrix. In this case, the estimator takes the form 
\begin{align} \label{eqn:defn_Vp}
    V^p_n = {\binom{n}{p}}^{-1}
\sum_{1\le i_1< i_2< \ldots <i_p \le n} \prod_{\ell=1}^{p-1}(\bX\bS\bX^\top)_{i_\ell i_{\ell+1}}\,.
\end{align}
The variance of the estimator~\eqref{eqn:defn_Vp} is expressed in Theorem~\ref{thm:varrep} using the norms of powers of $\bS$. This result is obtained by transforming the calculation of the terms in $\EX[(V^p_n )^2]$ to a trace form, as presented in Lemma~\ref{lem:twocycles}. In Section~\ref{sec:MNCalc}, we investigate the recursive structure of the trace form, which provides the main technical results of this paper. Theorem~\ref{thm:varrep} allows us to provide a new upper bound of the variance of the estimator in Theorem~\ref{thm:upper_bound}. This upper bound is the sum of four separate terms which are  $\tr^2(\bS^p)$, the square of the expectation multiplied by some factors. The first two terms have factors bounded by one when $n\ge 2p$ and each of these two terms decreases at least of order $O(1/n)$ as $n$ grows. In contrast, when $n<2p$, these two terms are equal to zero. The third term is bounded by $\frac{(3pd)^p}{(dn)^2}\cdot\tr^2(\bS^p)$, while the fourth term is bounded by $\frac{(12pd)^{p/2}}{dn^{p/2}}\cdot\tr^2(\bS^p)$. The sum of the four upper bounds of these terms is $(2+ \frac{(3pd)^p}{(dn)^2}+\frac{(12pd)^{p/2}}{dn^{p/2}})\cdot\tr^2(\bS^p)$, significantly smaller than the bound in~\eqref{eqn:KongValiantBound}. Notice that as out Theorem~\ref{thm:varrep} suggests, the factor $d^{p-2}$ is unavoidable. Indeed, consider the case that all the singular values of $B$ are roughly the same. Then the variance is roughly equal to $d^p$ as it is in the form of Schatten $p$ norms with $p\ge 2$. In this case, $\tr^2(\bS^p)$ is of the order of $d^2$. 

While finalizing this paper, we became aware of~\cite{ChuCortinovis2024}. Therein, the authors also consider the problem of obtaining an improved upper bound of the variance in~\cite{10.1214/16-AOS1525} with Gaussian sketching matrices. The authors in~\cite{ChuCortinovis2024} obtain the bounds described therein through a significantly different approach and these bounds are expressed in terms of Schatten norms of various orders that can be as high as $4p$, unlike the explicit bound presented in Theorem~\ref{thm:upper_bound} of the present paper. Moreover, in 
contrast to~\cite{ChuCortinovis2024} and other related work, Theorem~\ref{thm:varrep} of the this paper 
presents the exact relation between the Schatten norms. 

\section{Preliminaries and Notations}

\begin{defn}
\label{defn:cycle}
Given $n, k\in \Nat$, a $k$-cycle is a sequence of $k$ distinct integers, $\sigma=(\sigma_1,\ldots,\sigma_k)$ with $\sigma_i\in [n]\equiv[1,\ldots,n]$ for each $i=1,2,\ldots, k$. An increasing $k$-cycle $\sigma=(\sigma_1,\ldots\sigma_k)$ is s a $k$-cycle with the additional property that $\sigma_1<\sigma_2<\ldots<\sigma_k$. For an $n\times n$
matrix $A$, each cycle $\sigma$ defines a number:
\begin{align*}
A_\sigma= \prod_{i=1}^k A_{\sigma_i, \sigma_{i+1}}
\end{align*}
with the convention $\sigma_{k+1}=\sigma_1$. 
\end{defn}
Let $\pnCycles$ denote the collection of all increasing $p$-cycles, now we can write
\[
V_p^n={\binom{n}{p}}^{-1}\sum_{\sigma\in \pnCycles}(\bX \bS \bX^\top)_{[\sigma]}\,.
\]
For any two $p$-cycles, $\sigma$ and $\tau$, let $\gamma=(\gamma_1,\gamma_2,\ldots,\gamma_q)$ be the increasing $q$-cycle of all their common elements. Slightly abusing the notation, whenever convenient we shall use the symbol of a cycle to denote the set of its elements, for example $\gamma= \sigma \cap \tau$. Furthermore, define $k^{\sigma, \tau}_i, m^{\sigma,\tau}_i,$ $i=0,1,\dots,q$, by:
\begin{align*}
\begin{array}{llc}
k^{\sigma, \tau}_0:=|\{j: j\in\sigma, j< \gamma_1\}|, 
& m^{\sigma, \tau}_0:=|\{j: j\in\tau, j< \gamma_1\}|,
&i=0
\\ 
k^{\sigma, \tau}_i := |\{j: j\in\sigma, \gamma_i \le j< \gamma_{i+1}\}|, 
& m^{\sigma, \tau}_i := |\{j: j\in\tau, \gamma_i \le j< \gamma_{i+1}\}|,
& 1\le i < q, 
\\
k^{\sigma, \tau}_q:= |\{j: j\in\sigma, j \ge \gamma_q\}|, 
& m^{\sigma, \tau}_q:= |\{j: j\in\tau, j \ge \gamma_q\}|,
&i=q.
\end{array}
\end{align*} 
In the following, we omit the superscripts whenever it does not cause any ambiguity.  
By definition, the cardinality of an empty set is zero. When $q=0$, we set $k_0=m_0=p$.
\begin{example}
$\sigma=(1,3,5,7,9), \tau=(3,4,5,6,7)$, then $\gamma=(3,5,7)$. We have, $k_0=1,k_1=1,k_2=1,k_3=2$, and $m_0=0,m_1=2,m_2=2,m_3=1$.
\end{example}

A key quantity for the calculation of the variance of the trace estimator is the expectation of the (block) product $\bX \bS \bX^\top$ for any possible row/column 
combination, i.e., quantities of the following type for any $\sigma, \tau\in \pnCycles$:

\begin{align}
\nonumber
\EX[(\bX \bS \bX^\top)_{[\sigma]}&(\bX \bS \bX^\top)_{[\tau]}]
=
\EX\left[\prod_{i=1}^p (\bX \bS \bX^\top)_{\sigma_i, \sigma_{i+1}}\prod_{i=1}^p(\bX \bS \bX^\top)_{\tau_i, \tau_{i+1}}\right]
\\
=& 
\EX\left[\prod_{i=1}^p (X_{\sigma_i}^\top \bS X_{\sigma_{i+1}})\prod_{i=1}^p(X_{\tau_i}^\top \bS X_{\tau_{i+1}})\right]\label{eqn:cross_term}
\\
=&\nonumber
\EX\left[X_{\sigma_1}^\top\bS\left(\prod_{i=2}^p (X_{\sigma_i} X_{\sigma_{i}}^\top\bS)\right)X_{\sigma_1} X_{\tau_1}^\top \bS\left(\prod_{i=2}^p(X_{\tau_i}  X_{\tau_{i}}^\top\bS)\right)X_{\tau_1}
\right]\,.
\end{align}

Here $\EX$ denotes the expected value either with respect to all variables $\bX$ or with respect to all the rows $X_{j:}$ which appear in the product. We shall  write $\EX=\EX_{-t}\EX_{t}$, where $E_t$ is the expectation with respect to the row $\bX_{t:}$ and $\EX_{-t}$ with respect to all the other (independent) rows of  $\bX$.

Another key element of our calculations is the following lemma, which is stated as Lemma D.2 in~\cite{zhang2023trained}.
\begin{lem}
\label{lem:matrix_id}
If $X$ is a Gaussian random vector of $d$ dimension, mean zero and covariance matrix $\Lam$, and $A\in \Real^{d\times d}$ is a fixed matrix, then, 
\begin{align}
\label{eqn:matrix_id}
\EX[X X^\top A X X^\top]=\Lam(A+A^\top)\Lam + \tr(A\Lam)\Lam.
\end{align}
\end{lem}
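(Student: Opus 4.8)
The plan is to prove the identity entrywise by invoking the Gaussian fourth-moment formula (Isserlis'/Wick's theorem) and then re-assembling the resulting index sums into matrix products. First I would fix indices $i,l$ and expand the $(i,l)$ entry of the left-hand side as $\EX[(XX^\top A XX^\top)_{il}] = \sum_{j,k} A_{jk}\,\EX[X_i X_j X_k X_l]$, where the sum runs over all $j,k\in[d]$.

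The central step is to apply the fourth-moment identity for a zero-mean Gaussian vector with covariance $\Lam$, namely $\EX[X_i X_j X_k X_l] = \Lam_{ij}\Lam_{kl} + \Lam_{ik}\Lam_{jl} + \Lam_{il}\Lam_{jk}$, which expresses the fourth moment as the sum over the three pairings of the four indices. Substituting this into the entrywise sum splits it into three pieces, and I would recognize each piece as a matrix product evaluated at $(i,l)$: the pairing $\Lam_{ij}\Lam_{kl}$ yields $\sum_{j,k}\Lam_{ij}A_{jk}\Lam_{kl} = (\Lam A\Lam)_{il}$; the pairing $\Lam_{ik}\Lam_{jl}$ yields $(\Lam A^\top\Lam)_{il}$, after transposing the inner contraction $\sum_j A_{jk}\Lam_{jl} = (A^\top\Lam)_{kl}$; and the pairing $\Lam_{il}\Lam_{jk}$ factors the index $l$ out and leaves $\Lam_{il}\sum_{j,k}A_{jk}\Lam_{jk}$.

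To finish, I would identify the scalar $\sum_{j,k}A_{jk}\Lam_{jk}$ with $\tr(A^\top\Lam)$, and then use that $\Lam$ is symmetric (being a covariance matrix) to rewrite $\tr(A^\top\Lam) = \tr(A\Lam)$. Collecting the three pieces gives $\Lam A\Lam + \Lam A^\top\Lam + \tr(A\Lam)\Lam = \Lam(A+A^\top)\Lam + \tr(A\Lam)\Lam$, which is exactly the claimed identity, and since the equality holds for every entry $(i,l)$ it holds as a matrix identity.

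The computation carries no real conceptual obstacle; the only thing to watch is the index bookkeeping across the three pairings—in particular keeping track of which contraction produces $A$ versus $A^\top$—and remembering that the symmetry of $\Lam$ is what collapses $\tr(A^\top\Lam)$ into $\tr(A\Lam)$ and, together with the sum $A+A^\top$, makes the final expression manifestly symmetric in the appropriate sense.
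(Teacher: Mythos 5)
Your proof is correct: expanding the $(i,l)$ entry as $\sum_{j,k}A_{jk}\,\EX[X_iX_jX_kX_l]$, applying the Isserlis/Wick fourth-moment formula $\EX[X_iX_jX_kX_l]=\Lam_{ij}\Lam_{kl}+\Lam_{ik}\Lam_{jl}+\Lam_{il}\Lam_{jk}$, and reassembling the three pairings into $(\Lam A\Lam)_{il}$, $(\Lam A^\top\Lam)_{il}$, and $\Lam_{il}\tr(A\Lam)$ is a complete and accurate argument, including the point that the symmetry of $\Lam$ is what converts $\tr(A^\top\Lam)$ into $\tr(A\Lam)$. There is, however, nothing in the paper to compare it against: the paper does not prove this lemma but imports it verbatim, citing it as Lemma D.2 of Zhang, Frei and Bartlett (\emph{Trained transformers learn linear models in-context}). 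So your write-up takes a genuinely different route in the only sense available --- you supply a self-contained, elementary proof where the paper outsources the computation to an external reference. What your approach buys is independence from that reference (the entire variance calculation in the paper rests on this identity, so a reader of the paper currently must trust or chase the citation); what it costs is that you must either invoke Wick's theorem as a known black box or prove it in turn, e.g.\ via the moment generating function. Both choices are legitimate, but if this lemma were to be incorporated into the paper, your two-line pairing computation would be the natural thing to include.
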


\section{Trace Lemma}
\label{sec: tracelemma}

 In the sequel, we shall often use the following elementary observation. 
In a product $AX^\top B Y C$ of square matrices $A,B,C$ and vectors $X,Y$, we see that $X^\top B Y$ is a scalar so it is equal to its trace. The cyclic property of the trace operator gives us $\tr(X^\top B Y)=\tr(Y X^\top B)$. Therefore,  $AX^\top B Y C=AC\tr(YX^\top B) $, and $
\tr(AX^\top B Y C)=\tr(AC)\tr(YX^\top B)$. 

\begin{lem}
\label{lem:twocycles}
For any two increasing $p$-cycles, $\sigma$ and $\tau$, and $\gamma=\sigma\cap \tau$ with $|\gamma|=q$, we have, when $q=0$, $
\EX[(\bX \bS \bX^\top)_{[\sigma]}(\bX \bS \bX^\top)_{[\tau]}]
=\tr^2(\bS^p)$. When $q>0$, we have,
\small{
\begin{align}
\label{eqn:twocycles}
\EX[(\bX \bS \bX^\top)_{[\sigma]}(\bX \bS \bX^\top)_{[\tau]}]=\EX\left[\tr\left(\left(\prod_{i=1}^q X_{\gamma_i}X_{\gamma_i}^\top \bS^{k_i}\right)\bS^{k_0}\left(\prod_{i=1}^qX_{\gamma_i}X_{\gamma_i}^\top \bS^{m_i}\right)\bS^{m_0}\right)\right].
\end{align}}
\end{lem}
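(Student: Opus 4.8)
The plan is to reduce the two cycle products to a single trace of rank-one factors interleaved with powers of $\bS$, and then to exploit independence to integrate out the rows that the two cycles do not share. Starting from the last line of~\eqref{eqn:cross_term}, the integrand is already a product of two scalars, one built from the $\sigma$-rows and one from the $\tau$-rows. Applying the scalar/trace observation of Section~\ref{sec: tracelemma} to each factor (a scalar equals its trace, then use cyclicity) rewrites the integrand as $\tr\big(\prod_{i=1}^p X_{\sigma_i}X_{\sigma_i}^\top\bS\big)\cdot\tr\big(\prod_{i=1}^p X_{\tau_i}X_{\tau_i}^\top\bS\big)$. In the case $q=0$ the two cycles share no row; since distinct rows of $\bX$ are independent with $\EX[X_jX_j^\top]=I_d$, the expectation factorizes, and by multilinearity of the trace $\EX\big[\tr\big(\prod_{i=1}^p X_{\sigma_i}X_{\sigma_i}^\top\bS\big)\big]=\tr(\bS^p)$, so the product is $\tr^2(\bS^p)$, as claimed.

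For $q>0$ I would write $\EX=\EX_{\gamma}\EX_{-\gamma}$, where $\EX_{-\gamma}$ integrates the rows indexed outside $\gamma$ and $\EX_{\gamma}$ the common rows. Conditioned on the common rows, $(\bX\bS\bX^\top)_{[\sigma]}$ is a function of the $\sigma$-rows only and $(\bX\bS\bX^\top)_{[\tau]}$ of the $\tau$-rows only; the non-common parts are disjoint and independent, so the conditional expectation factorizes as $\EX_{-\gamma}\big[(\bX\bS\bX^\top)_{[\sigma]}(\bX\bS\bX^\top)_{[\tau]}\big]=\EX_{-\gamma}[(\bX\bS\bX^\top)_{[\sigma]}]\cdot\EX_{-\gamma}[(\bX\bS\bX^\top)_{[\tau]}]$.

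Next I integrate out the non-common rows within each trace. By multilinearity of the trace and independence of distinct rows, the effect of $\EX_{-\gamma}$ is to replace each rank-one factor $X_jX_j^\top$ with $I_d$ when $j\notin\gamma$ and to leave it intact when $j\in\gamma$. Because $\sigma$ is increasing, the common indices occur in the order $\gamma_1<\cdots<\gamma_q$, with $k_0$ non-common indices before $\gamma_1$, exactly $k_i-1$ strictly between $\gamma_i$ and $\gamma_{i+1}$ for $1\le i<q$, and $k_q-1$ after $\gamma_q$. Each collapsed non-common factor contributes a single $\bS$, so between $X_{\gamma_i}X_{\gamma_i}^\top$ and the next common factor one accumulates exactly $\bS^{k_i}$, while the $k_0$ leading factors wrap cyclically into a trailing $\bS^{k_0}$. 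Using cyclicity of the trace to move $\bS^{k_0}$ to the end gives $\EX_{-\gamma}[(\bX\bS\bX^\top)_{[\sigma]}]=\tr(M)$ with $M=\big(\prod_{i=1}^q X_{\gamma_i}X_{\gamma_i}^\top\bS^{k_i}\big)\bS^{k_0}$, and symmetrically $\EX_{-\gamma}[(\bX\bS\bX^\top)_{[\tau]}]=\tr(N)$ with $N=\big(\prod_{i=1}^q X_{\gamma_i}X_{\gamma_i}^\top\bS^{m_i}\big)\bS^{m_0}$.

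Finally I would merge $\tr(M)\tr(N)$ into the single trace appearing in~\eqref{eqn:twocycles}. The key is that both $M$ and $N$ begin with the same rank-one factor $X_{\gamma_1}X_{\gamma_1}^\top$: writing $M=X_{\gamma_1}X_{\gamma_1}^\top P$, the quantity $X_{\gamma_1}^\top P X_{\gamma_1}=\tr(M)$ is a scalar, so in $\tr(MN)=\tr\big(X_{\gamma_1}X_{\gamma_1}^\top P\,X_{\gamma_1}X_{\gamma_1}^\top Q\big)$ it can be pulled out, leaving $\tr(MN)=\tr(M)\tr(N)$, which is precisely the integrand of~\eqref{eqn:twocycles}; applying $\EX_{\gamma}$ finishes the argument. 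I expect the main obstacle to be the bookkeeping in the collapse step: one must check, from the increasing order of $\sigma$ and $\tau$ together with the definitions of $k_i$ and $m_i$, that the non-common runs accumulate exactly the stated powers of $\bS$ and that the wrap-around deposits $\bS^{k_0}$ and $\bS^{m_0}$ in the correct positions. The trace reduction, the conditional factorization, and the final merge are then routine consequences of the scalar/trace identity and $\EX[X_jX_j^\top]=I_d$.
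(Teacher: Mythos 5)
Your proof is correct, and although it relies on the same basic ingredients as the paper's proof --- collapsing each non-shared row via $\EX[X_jX_j^\top]=I_d$, the scalar-equals-trace observation, and cyclicity of the trace --- it is organized along a genuinely different line. The paper works from the representation \eqref{eqn:cross_term}, which singles out the rows $X_{\sigma_1}$ and $X_{\tau_1}$, and is therefore forced into a case analysis on the position of these rows relative to $\gamma_1$ (the cases $\sigma_1=\tau_1=\gamma_1$, then $\sigma_1<\tau_1=\gamma_1$, then $\sigma_1<\tau_1<\gamma_1$), in each case integrating out the distinguished rows via \eqref{eqn:sampletrace} and merging the resulting traces through explicit chains of cyclic manipulations. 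You instead symmetrize first, writing each cycle product as $\tr\bigl(\prod_{i=1}^p X_{\sigma_i}X_{\sigma_i}^\top\bS\bigr)$, then use conditional independence given the common rows to factorize $\EX_{-\gamma}$, collapse \emph{all} non-common rows at once, and finish with a single pointwise identity $\tr(M)\tr(N)=\tr(MN)$, valid whenever $M$ and $N$ share the leading rank-one factor $X_{\gamma_1}X_{\gamma_1}^\top$. This removes the case analysis entirely: when $\sigma_1\in\gamma$ your argument simply produces $\bS^{k_0}=\bS^{0}=I_d$ at the wrap-around position, so all configurations are handled uniformly; your exponent bookkeeping ($k_0$ leading collapsed factors, $k_i-1$ strictly between $\gamma_i$ and $\gamma_{i+1}$, $k_q-1$ trailing) matches the definitions of $k_i^{\sigma,\tau}$ exactly, so the collapse step is sound. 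What the paper's route buys is that every manipulation is written out explicitly and tied to Lemma hypotheses; what your route buys is brevity and a reusable observation --- the pointwise trace-merge identity, which the paper in effect re-derives inside each case through its steps (I)--(V) --- together with a cleaner separation of the probabilistic input (conditional independence and isotropy) from the algebraic input (cyclicity).
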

\begin{proof}   
We consider the two following cases:
\begin{enumerate}
\item 
In the case of $\sigma\cap\tau=\emptyset$, $q=0$, \begin{align*}
&\EX[(\bX \bS \bX^\top)_{[\sigma]}(\bX \bS \bX^\top)_{[\tau]}]
\\
=&\EX\left[X^\top_{\sigma_1}\bS \left(\prod_{i=2}^p X_{\sigma_i}X_{\sigma_i}^\top \bS \right) X_{\sigma_1} X^\top_{\tau_1}\bS \left(\prod_{i=2}^p X_{\tau_i}X_{\tau_i}^\top \bS \right) X_{\tau_1}\right]\\
=&\EX[X^\top_{\sigma_1} \bS^p X_{\sigma_1} X^\top_{\tau_1} \bS^p X_{\tau_1}]=\EX[\tr(X^\top_{\sigma_1} \bS^p X_{\sigma_1}) \tr(X^\top_{\tau_1} \bS^p X_{\tau_1})]
\\=& \EX[X^\top_{\sigma_1} \bS^p X_{\sigma_1}]\EX[X^\top_{\tau_1} \bS^p X_{\tau_1}]=\tr(\bS^p)^2.
\end{align*} 
because of the independence of $X_s$. 
\\ 
In the following we consider the case where $q>0$ where, without loss of generality, we 
assume that $\sigma_1\le \tau_1$. 
\item
When $\sigma_1=\tau_1(=\gamma_1)$, then $k_0=m_0=0$. Consider $X_s$ for $s\in \sigma \Delta \tau$. Every such $X_s$ appears in the form of $X_sX^\top_s$ only once, therefore, the expectation produces an identity matrix, and due to the independence assumption can be omitted. Therefore in this case, 
\small{
\begin{align*}
&\EX[(\bX \bS \bX^\top)_{[\sigma]}(\bX \bS \bX^\top)_{[\tau]}]= \EX\left[X_{\gamma_1}^\top \left(\prod_{i=2}^q X_{\gamma_i}X_{\gamma_i}^\top \bS^{k_i}\right) 
X_{\gamma_1}X_{\gamma_1}^\top \left(\prod_{i=2}^q X_{\gamma_i}X_{\gamma_i}^\top \bS^{m_i}\right)  
X_{\gamma_1}\right].
\end{align*}}
As mentioned before as the expression inside the expectation is a scalar it is equal to its trace.
The cyclic property of the trace operator implies that, 
\small{
\begin{align*}
&\EX[(\bX \bS \bX^\top)_{[\sigma]}(\bX \bS \bX^\top)_{[\tau]}]= \EX\left[\tr\left(X_{\gamma_1}X_{\gamma_1}^\top \left(\prod_{i=2}^q X_{\gamma_i}X_{\gamma_i}^\top \bS^{k_i}\right)  
X_{\gamma_1}X_{\gamma_1}^\top\left(\prod_{i=2}^q X_{\gamma_i}X_{\gamma_i}^\top \bS^{m_i}\right)  \right)
\right], 
\end{align*}}
this is exactly equation~\eqref{eqn:twocycles}. \\
When $\sigma_1<\tau_1$, then $\sigma_1\notin \gamma$.
\item
 First, consider $\tau_1=\gamma_1\in \gamma$, thus $m_0=0$, we have, 
\small{
\begin{align*}
&\EX[(\bX \bS \bX^\top)_{[\sigma]}(\bX \bS \bX^\top)_{[\tau]}]= \EX\left[X_{\sigma_1}^\top \bS^{k_0}\left(\prod_{i=1}^q X_{\gamma_i}X_{\gamma_i}^\top \bS^{k_i}\right)  
X_{\sigma_1}X_{\gamma_1}^\top \bS^{m_1}\left(\prod_{i=2}^q X_{\gamma_i}X_{\gamma_i}^\top \bS^{k_i}\right)
X_{\gamma_1}\right] 
\end{align*}}
\small{
\begin{align*}
&\EX[(\bX \bS \bX^\top)_{[\sigma]}(\bX \bS \bX^\top)_{[\tau]}]
\\ = & \EX\left[X_{\sigma_1}^\top \bS^{k_0}\left(\prod_{i=1}^q X_{\gamma_i}X_{\gamma_i}^\top\bS^{k_i}\right) X_{\sigma_1} X_{\gamma_1}^\top \bS^{m_1}\left(\prod_{i=2}^q X_{\gamma_i}X_{\gamma_i}^\top \bS^{m_i}\right) 
X_{\gamma_1}\right] 
\\ = & \EX\left[\EX_{\sigma_1}\left[X_{\sigma_1}^\top \bS^{k_0}\left(\prod_{i=1}^q X_{\gamma_i}X_{\gamma_i}^\top\bS^{k_i}\right) X_{\sigma_1} \right]X_{\gamma_1}^\top \bS^{m_1}\left(\prod_{i=2}^q X_{\gamma_i}X_{\gamma_i}^\top \bS^{m_i}\right) 
X_{\gamma_1}\right] 
\\\stackrel{(I)}{=}&\EX\left[\tr\left(\bS^{k_0}\left(\prod_{i=1}^q X_{\gamma_i}X_{\gamma_i}^\top\bS^{k_i}\right)\right)X_{\gamma_1}^\top \bS^{m_1}\left(\prod_{i=2}^q X_{\gamma_i}X_{\gamma_i}^\top \bS^{m_i}\right)
X_{\gamma_1}\right]
\\\stackrel{(II)}{=}&\EX\left[\tr\left(\left(\prod_{i=1}^q X_{\gamma_i}X_{\gamma_i}^\top\bS^{k_i}\right)\bS^{k_0}\right)X_{\gamma_1}^\top \bS^{m_1}\left(\prod_{i=2}^q X_{\gamma_i}X_{\gamma_i}^\top \bS^{m_i}\right)
X_{\gamma_1}\right]
\\=&\EX\left[\tr\left(X_{\gamma_1}X_{\gamma_1}^\top\bS^{k_1}\left(\prod_{i=2}^q X_{\gamma_i}X_{\gamma_i}^\top\bS^{k_i}\right)\bS^{k_0}\right)X_{\gamma_1}^\top \bS^{m_1}\left(\prod_{i=2}^q X_{\gamma_i}X_{\gamma_i}^\top \bS^{m_i}\right) 
X_{\gamma_1}\right]
\\\stackrel{(III)}{=}&\EX\left[\tr\left(X_{\gamma_1}^\top\bS^{k_1}\left(\prod_{i=2}^q X_{\gamma_i}X_{\gamma_i}^\top\bS^{k_i}\right)\bS^{k_0}X_{\gamma_1}\right)X_{\gamma_1}^\top \bS^{m_1}\left(\prod_{i=2}^q X_{\gamma_i}X_{\gamma_i}^\top \bS^{m_i}\right)
X_{\gamma_1}\right]
\\\stackrel{(IV)}{=}&\EX\left[X_{\gamma_1}^\top\bS^{k_1}\left(\prod_{i=2}^q X_{\gamma_i}X_{\gamma_i}^\top\bS^{k_i}\right)\bS^{k_0}X_{\gamma_1}X_{\gamma_1}^\top \bS^{m_1}\left(\prod_{i=2}^q X_{\gamma_i}X_{\gamma_i}^\top \bS^{m_i}\right)
X_{\gamma_1}\right]\\\stackrel{(V)}{=}&\EX\left[\tr\left(X_{\gamma_1}X_{\gamma_1}^\top\bS^{k_1}\left(\prod_{i=2}^q X_{\gamma_i}X_{\gamma_i}^\top\bS^{k_i}\right)\bS^{k_0}X_{\gamma_1}X_{\gamma_1}^\top \bS^{m_1}\left(\prod_{i=2}^q X_{\gamma_i}X_{\gamma_i}^\top \bS^{m_i}\right) \right)
\right]
\\=&\EX\left[\tr\left(\left(\prod_{i=1}^q X_{\gamma_i}X_{\gamma_i}^\top\bS^{k_i}\right)\bS^{k_0}\left(\prod_{i=1}^q X_{\gamma_i}X_{\gamma_i}^\top \bS^{m_i}\right) \right)
\right],
\end{align*}}
where (I) follows from \eqref{eqn:sampletrace}, (II), (III) and (V) the results of the cyclic property of the trace operator, and (IV) equates a scalar to its trace.
This is again in the desired form. 
\item
Finally, we consider the case of $\tau_1\notin \sigma\cap\tau$, $\sigma_1<\tau_1<\gamma_1$, through similar arguments. We have,
\begin{align*}
&\EX[(\bX \bS \bX^\top)_{[\sigma]}(\bX \bS \bX^\top)_{[\tau]}]= 
\\=&
\EX\left[\tr\left(\bS^{k_0}\prod_{i=1}^q X_{\gamma_i}X_{\gamma_i}^\top\bS^{k_i}\right)\tr\left(\bS^{m_0}\prod_{i=1}^q X_{\gamma_i}X_{\gamma_i}^\top\bS^{m_i} \right)\right] ,\qquad
\text{after taking $\EX_{\sigma_1}$ and $\EX_{\tau_1}$}
\\=&
\EX\left[\tr\left(\prod_{i=1}^q X_{\gamma_i}X_{\gamma_i}^\top\bS^{k_i}\bS^{k_0}\right)\tr\left(\prod_{i=1}^q X_{\gamma_i}X_{\gamma_i}^\top\bS^{m_i} \bS^{m_0}\right)\right] 
\\=&
\EX\left[\tr\left(X_{\gamma_1}X_{\gamma_1}^\top\bS^{k_1}\prod_{i=2}^qX_{\gamma_i}X_{\gamma_i}^\top\bS^{k_i}\bS^{k_0}\right)\tr\left(X_{\gamma_1}X_{\gamma_1}^\top\bS^{m_1}\prod_{i=2}^q X_{\gamma_i}X_{\gamma_i}^\top\bS^{m_i} \bS^{m_0}\right)\right] 
\\=&
\EX\left[\tr\left(X_{\gamma_1}^\top\bS^{k_1}\prod_{i=2}^q X_{\gamma_i}X_{\gamma_i}^\top\bS^{k_i}\bS^{k_0}X_{\gamma_1}\right)\tr\left(X_{\gamma_1}^\top\bS^{m_1}\prod_{i=2}^qX_{\gamma_i}X_{\gamma_i}^\top\bS^{m_i} \bS^{m_0}X_{\gamma_1}\right)\right] 
\\=&
\EX\left[X_{\gamma_1}^\top\bS^{k_1}\prod_{i=2}^q X_{\gamma_i}X_{\gamma_i}^\top\bS^{k_i}\bS^{k_0}X_{\gamma_1}X_{\gamma_1}^\top\bS^{m_1}\prod_{i=2}^q X_{\gamma_i}X_{\gamma_i}^\top\bS^{m_i} \bS^{m_0}X_{\gamma_1}\right] 
\\=&
\EX\left[\tr\left(X_{\gamma_1}^\top\bS^{k_1}\prod_{i=2}^qX_{\gamma_i}X_{\gamma_i}^\top\bS^{k_i}\bS^{k_0}X_{\gamma_1}X_{\gamma_1}^\top\bS^{m_1}\prod_{i=2}^qX_{\gamma_i}X_{\gamma_i}^\top\bS^{m_i} \bS^{m_0}X_{\gamma_1}\right)\right] 
\\=&
\EX\left[\tr\left(X_{\gamma_1}X_{\gamma_1}^\top\bS^{k_1}\prod_{i=2}^qX_{\gamma_i}X_{\gamma_i}^\top\bS^{k_i}\bS^{k_0}X_{\gamma_1}X_{\gamma_1}^\top\bS^{m_1}\prod_{i=2}^q X_{\gamma_i}X_{\gamma_i}^\top\bS^{m_i} \bS^{m_0}\right)\right].
\end{align*}
Thus, equation~\eqref{eqn:twocycles} holds. 
\end{enumerate}
This concludes the proof of Lemma~\ref{lem:twocycles}.
\end{proof}

\section{Recursion Structure of Trace Calculations}
\label{sec:MNCalc}

For each integer $q\ge 1$, and vectors of non-negative integers $\bk_q=( k_1, k_2, \ldots k_q)$ and $\bm_q=(m_1, m_2, \ldots m_q)$, define,
\small{
\begin{align*}
M(q;k_1, k_2, \ldots k_q\big|m_1, m_2, \ldots m_q) := \EX\left[\tr\left(\prod_{i=1}^q(X_i X_i^\top) \bS^{k_i}\prod_{i=1}^q(X_i X_i^\top) \bS^{m_i} \right)\right],
\end{align*}}
and
\small{
\begin{align*}
N(q;k_1, k_2, \ldots k_q\big|m_q, m_{q-1} \ldots m_1):= \EX\left[\tr\left(\prod_{i=1}^q(X_i X_i^\top) \bS^{k_i}\prod_{i=1}^q(X_{q+1-i} X_{q+1-i}^\top) \bS^{m_{q+1-i}} \right)\right].
\end{align*}}
Here $X_i,i=1,2,\ldots, q $ are i.i.d $d$-variate standard Gaussian random variables. As indicated in~\eqref{eqn:twocycles}, the evaluation of $M(q;q_1, q_2, \ldots k_q\big|m_1, m_2, \ldots m_q)$ is key to the estimation of the variance.

\begin{lem}
\label{lem:recursion}
For $q=1$, we have, $M(1;k\big|m)=N(1;k\big|m)=2\tr(\bS^{k+m})+\tr(\bS^{k})\tr(\bS^{m})$. 
For $q\ge 1$, we have the following relationships,
\begin{align}
M&(q+1;k_1, k_2, \ldots k_q, k_{q+1}\big| m_1, m_2, \ldots m_q, m_{q+1})\nonumber
\\= &
M(q; k_1, k_2, \ldots k_q+k_{q+1}\big| m_1, m_2, \ldots m_q+m_{q+1})  \nonumber
\\&
+
N(q; k_1, k_2, \ldots k_q+m_{q}, \big|m_{q-1}, \ldots m_1, k_{q+1}+m_{q+1}) \nonumber
\\&
+
M(q; k_1, k_2, \ldots k_q+m_{q+1}\big| m_1, m_2, \ldots m_q+k_{q+1}),
\label{eqn:recusion_M}
\end{align}
\begin{align}
N&(q+1;k_1, k_2, \ldots k_q, k_{q+1}\big| m_{q+1}, m_q, \ldots m_2, m_{1})\nonumber\\ =&  2\,N(q; k_1, k_2, \ldots k_q+k_{q+1}+m_{q+1}\big| m_q, \ldots m_2, m_1)
\nonumber \\& +
\tr[\bS^{k_{q+1}}]N(q; k_1, k_2, \ldots k_q+m_{q+1}\big| m_q, \ldots m_2, m_{1}).\label{eqn:recusion_N}
\end{align}
\end{lem}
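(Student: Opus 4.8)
The plan is to prove both recursions by integrating out the last Gaussian vector $X_{q+1}$ with Lemma~\ref{lem:matrix_id} specialized to $\Lam=I$, namely $\EX[X_{q+1}X_{q+1}^\top A X_{q+1}X_{q+1}^\top]=A+A^\top+\tr(A)I$, and then recognizing each resulting trace as an instance of $M$ or $N$ at level $q$. The workhorse throughout is the elementary pinch observation stated before Lemma~\ref{lem:twocycles}: since each $X_iX_i^\top$ has rank one, the trace of any product $(X_{i_1}X_{i_1}^\top)\bS^{c_1}\cdots(X_{i_r}X_{i_r}^\top)\bS^{c_r}$ factors into scalars $\prod_j X_{i_j}^\top\bS^{c_j}X_{i_{j+1}}$. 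Two consequences I will lean on are the symmetry $M(q;a_1,\ldots,a_q|b_1,\ldots,b_q)=M(q;b_1,\ldots,b_q|a_1,\ldots,a_q)$ (immediate from cyclicity), the analogous swap symmetry for $N$, and the factorization $\tr(PQ)=\tr(P)\tr(Q)$ whenever $P,Q$ are cyclic products each using $X_1X_1^\top,\ldots,X_qX_q^\top$ exactly once. For the base case $q=1$, I would compute $M(1;k|m)=\tr(\EX[X_1X_1^\top\bS^kX_1X_1^\top]\bS^m)=\tr((2\bS^k+\tr(\bS^k)I)\bS^m)=2\tr(\bS^{k+m})+\tr(\bS^k)\tr(\bS^m)$, and $N(1;k|m)=M(1;k|m)$ since reversing a one-element list does nothing.

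For~\eqref{eqn:recusion_N} I would expand $N(q+1;\cdot)$ and observe that the reversal of the second product places the two copies of $X_{q+1}X_{q+1}^\top$ adjacent, sandwiching $\bS^{k_{q+1}}$; taking $\EX_{q+1}$ of this sandwich yields $2\bS^{k_{q+1}}+\tr(\bS^{k_{q+1}})I$. The first piece merges $\bS^{k_q}\bS^{k_{q+1}}\bS^{m_{q+1}}=\bS^{k_q+k_{q+1}+m_{q+1}}$ at the top of the palindrome and leaves an order-$q$ structure, giving $2\,N(q;\ldots)$; the second leaves $\bS^{k_q+m_{q+1}}$ together with the scalar $\tr(\bS^{k_{q+1}})$, giving the second summand.

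The recursion~\eqref{eqn:recusion_M} is the crux, since the two copies of $X_{q+1}X_{q+1}^\top$ are now separated. I would first rotate by cyclicity to write $M(q+1;\cdot)=\EX[\tr((X_{q+1}X_{q+1}^\top)A(X_{q+1}X_{q+1}^\top)B)]$ with $A=\bS^{k_{q+1}}(X_1X_1^\top)\bS^{m_1}\cdots(X_qX_q^\top)\bS^{m_q}$ and $B=\bS^{m_{q+1}}(X_1X_1^\top)\bS^{k_1}\cdots(X_qX_q^\top)\bS^{k_q}$, so that the moment identity splits the expression into exactly three pieces, $\EX[\tr(AB)]+\EX[\tr(A^\top B)]+\EX[\tr(A)\tr(B)]$. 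Merging the $\bS$-powers at the internal junction and at the cyclic wraparound of $AB$, then invoking $M(a|b)=M(b|a)$, identifies $\EX[\tr(AB)]$ with the first term $M(q;\ldots,k_q+k_{q+1}|\ldots,m_q+m_{q+1})$. Transposing $A$ reverses the order of its $X$-blocks, so $\tr(A^\top B)$ acquires the palindromic form of $N$, with self-pinches $\bS^{k_{q+1}+m_{q+1}}$ at one end and $\bS^{k_q+m_q}$ at the other, yielding the middle term. For the third piece I would first absorb the loose factors $\bS^{k_{q+1}}$ and $\bS^{m_{q+1}}$ into the wraparounds, writing $\tr(A)=\tr(P)$ and $\tr(B)=\tr(Q)$ with $P,Q$ complete cyclic products, and then apply $\tr(P)\tr(Q)=\tr(PQ)$; the concatenated cycle $PQ$ carries wraparound exponents $m_q+k_{q+1}$ and $k_q+m_{q+1}$, so its expectation is the third term.

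The step I expect to be most delicate is this last identification: a priori $\EX[\tr(A)\tr(B)]$ is the expectation of a product of two traces and need not be a single $M$, but the rank-one pinch argument shows $\tr(P)\tr(Q)$ already equals the trace of the concatenated cycle identically, before any expectation is taken, precisely because the loose powers $\bS^{k_{q+1}},\bS^{m_{q+1}}$ are distributed to the two wraparounds rather than to the internal junctions (this is exactly what distinguishes the third term from $\tr(AB)$). Equally demanding is the exponent and index bookkeeping in the $N$-term of~\eqref{eqn:recusion_M}: one must track how the transpose-induced reversal of the $X$-blocks meshes with the reversed argument convention of $N$, which requires relabeling the i.i.d.\ vectors $X_i\mapsto X_{q+1-i}$ and using the swap symmetry of $N$ to bring the exponents into the stated order. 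The remaining manipulations are routine combinations of $\bS$-powers and appeals to cyclicity.
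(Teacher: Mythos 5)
Your proposal is correct and follows essentially the same route as the paper: cyclically rotate so that the two copies of $X_{q+1}X_{q+1}^\top$ sandwich the block $A$, integrate out $X_{q+1}$ via Lemma~\ref{lem:matrix_id} with $\Lam=I$, and identify the resulting three traces (respectively two, for $N$) with the terms of the recursions. Your write-up in fact supplies the bookkeeping that the paper compresses into ``simplification of these three terms,'' in particular the rank-one pinching identity $\tr(P)\tr(Q)=\tr(PQ)$ that turns the $\tr(A)\tr(B)$ piece into a single $M$-term.
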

\begin{proof}
For $q=1$, and any two non-negative integers $k$ and $m$, following Lemma~\ref{lem:matrix_id} results to
\begin{align*}
M(1; k\big|m)=N(1;k\big|m)=&\EX[\tr(X_1X_1^\top \bS^{k}X_1X_1^\top\bS^{m})]
= 2\tr(\bS^{k+m})+\tr(\bS^k)\tr(\bS^m).
\end{align*}
Moreover, from the cycle property of the trace operator, we can write 
\begin{align*}
&M(q+1;k_1, k_2, \ldots k_q, k_{q+1}\big| m_1, m_2, \ldots m_q, m_{q+1})
\\
=&
\EX\left[\tr\left( \prod_{i=1}^{q}(X_i X_i^\top) \bS^{k_i}(X_{q+1} X_{q+1}^\top) \bS^{k_{q+1}}\prod_{i=1}^{q}(X_i X_i^\top) \bS^{m_i}(X_{q+1} X_{q+1}^\top) \bS^{m_{q+1}}\right)\right]
\\
=&
\EX\left[\tr\left((X_{q+1} X_{q+1}^\top) \bS^{k_{q+1}}\prod_{i=1}^{q}(X_i X_i^\top) \bS^{m_i}(X_{q+1} X_{q+1}^\top) \bS^{m_{q+1}} \prod_{i=1}^{q}(X_i X_i^\top) \bS^{k_i}\right)\right].
\end{align*}
Taking expectation with respect to $X_{q+1}$, and applying Lemma~\ref{lem:matrix_id}, we can write the above quantity in the following equivalent expression:
\begin{align*}
& \EX\left[\tr\left( \left[\bS^{k_{q+1}}\prod_{i=1}^{q}(X_i X_i^\top) \bS^{m_i} \right]\bS^{m_{q+1}} \prod_{i=1}^{q}(X_i X_i^\top) \bS^{k_i}\right)\right]
\\& + \EX\left[\tr\left( \left[(\bS^{k_{q+1}}\prod_{i=1}^{q}(X_i X_i^\top) \bS^{m_i})^\top \right]\bS^{m_{q+1}} \prod_{i=1}^{q}(X_i X_i^\top) \bS^{k_i}\right)\right]
\\ &+ 
\EX\left[\tr\left( \tr\left(\bS^{k_{q+1}}\prod_{i=1}^{q}(X_i X_i^\top) \bS^{m_i}\right)\bS^{m_{q+1}} \prod_{i=1}^{q}(X_i X_i^\top) \bS^{k_i}\right)\right].
\end{align*}
The recursion~\eqref{eqn:recusion_M} follows from simplification of these three terms. Similarly, we have, 
\begin{align*}
&N(q+1;k_1, k_2, \ldots k_q, k_{q+1}\big| m_{q+1}, m_q, \ldots m_2, m_1)
\\ & \quad =
\EX\left[\tr\left(\prod_{i=1}^{q+1}(X_i X_i^\top) \bS^{k_i}\prod_{i=1}^{q+1}(X_{q+2-i} X_{q+2-i}^\top) 
\bS^{m_{q+2-i}} \right)\right] 
\\& \quad =
\EX\left[\tr\left(\prod_{i=1}^q(X_i X_i^\top) \bS^{k_i}(X_{q+1} X_{q+1}^\top) \bS^{k_{q+1}}(X_{q+1} X_{q+1}^\top) \bS^{m_{q+1}}\prod_{i=2}^{q+1}(X_{q+2-i} X_{q+2-i}^\top) \bS^{m_{q+2-i}} \right)\right]
\\& \quad =
\EX\left[\tr\left(\prod_{i=1}^q(X_i X_i^\top) \bS^{k_i}\EX[(X_{q+1} X_{q+1}^\top) \bS^{k_{q+1}}(X_{q+1} X_{q+1}^\top)] \bS^{m_{q+1}}\prod_{i=2}^{q+1}(X_{q+2-i} X_{q+2-i}^\top) \bS^{m_{q+2-i}} \right)\right]
\\& \quad \stackrel{(I)}{=}
2\,\EX\left[\tr\left(\prod_{i=1}^q(X_i X_i^\top) \bS^{k_i}\bS^{k_{q+1}}\bS^{m_{q+1}}\prod_{i=2}^{q+1}(X_{q+2-i} X_{q+2-i}^\top) \bS^{m_{q+2-i}} \right)\right] +
\\&\qquad + \tr[\bS^{k_{q+1}}]\EX\left[\tr\left(\prod_{i=1}^q(X_i X_i^\top) \bS^{k_i}\bS^{m_{q+1}}\prod_{i=2}^{q+1}(X_{q+2-i} X_{q+2-i}^\top) \bS^{m_{q+2-i}} \right)\right]
\\& \quad =
2\,N(q; k_1, k_2, \ldots k_q+k_{q+1}+m_{q+1}\big|m_q, \ldots m_2, m_1)
\\&\qquad +
\tr[\bS^{k_{q+1}}]N(q; k_1, k_2, \ldots k_q+m_{q+1}\big|m_q, \ldots m_2, m_{1}),
\end{align*}
where Lemma~\ref{lem:matrix_id} is applied to get (I).
\end{proof}

\subsection{Calculations of $N(q, \cdot, \cdot)$}
\label{sec:calc_N_New}

Consider once more the recursion,  
\begin{align*}
&N(q+1:k_1, k_2, \ldots, k_q, k_{q+1} \big| m_{q+1}, m_q, \ldots, m_2, m_1)\nonumber 
\\ & \quad =
2\,N(q;k_1, k_2, \ldots, k_q+k_{q+1}+m_{q+1} \big| m_q, \ldots, m_2, m_1)+\nonumber 
\\ & \qquad +  
S_{k_{q+1}}N(q: k_1, k_2, \ldots, k_q+m_{q+1} \big| m_q, \ldots, m_2, m_1). 
\end{align*}
Expanding this quantity gives
\begin{align*}
&N(q+1; k_1, k_2, \ldots, k_q, k_{q+1} \big| m_{q+1}, m_q, \ldots, m_2, m_1)\nonumber 
\\ & =
4\,N(q-1; k_1, k_2, \ldots, k_{q-2}, (k_{q-1}+m_q)+(k_q+m_{q+1})+k_{q+1} \big| m_{q-1}, \ldots, m_2, m_1)+
\\ &  +  
2\,S_{(k_q+m_{q+1})+k_{q+1}}N(q-1; k_1, k_2, \ldots, k_{q-2}, (k_{q-1}+m_q)\big| m_{q-1}, \ldots, m_2, m_1)
\\&+2S_{k_{q+1}}N(q-1, k_1, k_2, \ldots, k_{q-2}, (k_{q-1}+m_q)+(k_q+m_{q+1})\big| m_{q-1}, \ldots, m_2, m_1)
\\&+S_{k_{q+1}}S_{(k_q+m_{q+1})}N(q-1, k_1, k_2, \ldots, k_{q-2}, (k_{q-1}+m_q)\big| m_{q-1}, \ldots, m_2, m_1). 
\end{align*}
\subsubsection*{Introducing the symbol {\rm$\ostr$} and a related algebraic structure.}
The recursive formula of $N$ prompts us to introduce the following notation. 
 Let 
 \[
 W=\left\{2^r s_{i_0}s_{i_1}\dots s_{i_q} | r,q\in\{ 0,1,\dots,\}, i_n\in\Nat \}\right\}
 \]
 be a set of finite words with the alphabet $\{s_i, i\in \Nat\}$ preceded by a power of 2. We define two operations on $W$ generated by
$s_i\oplus s_j=2s_{i+j}$ and $s_i\otimes s_j=s_is_j$. 
Namely, if $w=2^r s_{i_0}\dots s_{i_k}$ and
$v=2^t s_{j_0}\dots s_{j_l}$ then
\begin{align*}
w\oplus v&=2^{r+t+1}s_{i_0}\dots s_{i_k+j_0}s_{j_1}\dots s_{j_l},
\\
w\otimes v
&=2^{r+t}s_{i_0}\dots s_{i_k}s_{j_0}s_{j_1}\dots s_{j_k}\,.
\end{align*}
For example, $s_i\oplus s_j \otimes s_k\oplus s_\ell= 4s_{i+j}s_{k+\ell}$. 
We shall use $\ostr$ to represent the generic operation, which means that it could be either $\otimes$ or $\oplus$. The operation $\ostr$ is associative $(w\ostr v)\ostr u=w\ostr(v\ostr v)$, regardless of the choice of the operation.
After resolving all $\ostr$ in the expression 
$w=s_{i_0}\ostr s_{i_1}\ostr \dots\ostr s_{i_q}$
we get $w=2^r s_{j_0}s_{j_1}\dots s_{j_k}$,
where the sum of indices remains unchanged $\sum_{\ell=0}^k j_\ell=\sum_{\ell=0}^q i_\ell$, 
the exponent $r$ is the number $\#\oplus$ of operations $\oplus$  among the $\ostr$'s and $k$, the length of the word, is the number $\#\otimes$ of operations $\otimes$ among $\ostr$'s in the original expression.

 Given the evaluation of a letter $\eV:s_i\mapsto S_i\in\Real$, we can extend it to the case of words $w=2^r s_{i_0}s_{i_1}\dots s_{i_q}$ as  $\eV(w)=2^r\cdot\prod_{\ell=0}^q S_{i_\ell}$. For a sequence of words $w_i$ we define
\begin{align}
\sum_\ostr \eV(w_1\ostr w_2\ostr\dots\ostr w_k)
    \label{eqn:sum ostar}
\end{align}
as the sum over all the $2^{q}$ choices of $\oplus$ and $\otimes$ in place of all $\ostr$'s.


For $q\ge 1$, let 
\begin{align*}
\al^q_0:=m_1, \quad \al^q_i=k_i+m_{i+1}, i=1,\ldots, q-1, \quad \al^q_q=k_q.
\end{align*}
\begin{lem}
\label{lem:N_in_beta}
With the evaluation $\eV(s_i)=S_i=\tr[\bS^i]$ we have, for $q\ge 1$,
{\rm
\begin{align}
&N(q; k_1, k_2, \ldots, k_q | m_q, \ldots, m_2, m_1)
= \sum_\ostr 
\eV(s_{\al^q_0}\ostar s_{\al^q_1}\ostar\ldots \ostar s_{\al^q_q})\,.
\label{eqn:N_in_beta}
\end{align}
}
\end{lem}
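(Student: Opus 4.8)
The plan is to argue by induction on $q$, matching the two-term recursion~\eqref{eqn:recusion_N} for $N$ against the two ways of resolving the \emph{last} operation in the symbolic product $s_{\al^q_0}\ostar\cdots\ostar s_{\al^q_q}$. First I would dispatch the base case $q=1$: the word is $s_{\al^1_0}\ostar s_{\al^1_1}=s_{m_1}\ostar s_{k_1}$, and $\sum_\ostr$ runs over the two choices for the single operation. Taking $\oplus$ gives $2s_{m_1+k_1}$, hence $2\tr[\bS^{k_1+m_1}]$ under $\eV$, while taking $\otimes$ gives $s_{m_1}s_{k_1}$, hence $\tr[\bS^{k_1}]\tr[\bS^{m_1}]$; their sum is exactly the closed form for $N(1;k_1\,|\,m_1)$ recorded in Lemma~\ref{lem:recursion}.

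For the inductive step I would expand $\sum_\ostr\eV(s_{\al^{q+1}_0}\ostar\cdots\ostar s_{\al^{q+1}_{q+1}})$ by splitting the $2^{q+1}$ terms according to whether the final operation, between $s_{\al^{q+1}_q}$ and $s_{\al^{q+1}_{q+1}}$, is $\oplus$ or $\otimes$. The engine of the argument is an elementary fact about resolving words: appending ``$\otimes\,s_{k_{q+1}}$'' to any resolved word simply peels off a standalone factor $\eV(s_{k_{q+1}})=\tr[\bS^{k_{q+1}}]$, whereas appending ``$\oplus\,s_{k_{q+1}}$'' leaves the run structure otherwise unchanged but adds $k_{q+1}$ to the index of the rightmost run and multiplies the whole evaluation by $2$. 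Consequently the $\otimes$-branch equals $\tr[\bS^{k_{q+1}}]\sum_\ostr\eV(s_{\al^{q+1}_0}\ostar\cdots\ostar s_{\al^{q+1}_q})$ and the $\oplus$-branch equals $2\sum_\ostr\eV(s_{\al^{q+1}_0}\ostar\cdots\ostar s_{\al^{q+1}_{q-1}}\ostar s_{\al^{q+1}_q+\al^{q+1}_{q+1}})$.

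It then remains to recognize these two sums as the $N(q;\cdot)$ terms on the right of~\eqref{eqn:recusion_N} via the induction hypothesis, which I would do by computing the $\al^q$-vectors of the two $N(q;\cdot)$ arguments appearing in the recursion. For $N(q;k_1,\ldots,k_q+m_{q+1}\,|\,m_q,\ldots,m_1)$ one gets $\al^q_0=m_1$, $\al^q_i=k_i+m_{i+1}$ for $i<q$, and $\al^q_q=k_q+m_{q+1}=\al^{q+1}_q$, i.e.\ precisely $\al^{q+1}_0,\ldots,\al^{q+1}_q$, matching the $\otimes$-branch. For $N(q;k_1,\ldots,k_q+k_{q+1}+m_{q+1}\,|\,m_q,\ldots,m_1)$ one gets the same leading indices but last index $k_q+k_{q+1}+m_{q+1}=\al^{q+1}_q+\al^{q+1}_{q+1}$, matching the $\oplus$-branch. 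Adding the two branches reproduces~\eqref{eqn:recusion_N} and closes the induction.

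I expect the main obstacle to be purely bookkeeping rather than any genuine estimate: verifying that conditioning on the last $\ostar$ factors the sum correctly, so that the extra factor of $2$ and the index-merging in the $\oplus$-branch are each accounted for exactly once, and that the re-indexing $\al^q\leftrightarrow\al^{q+1}$ lines up at the boundary term, where the recursion fuses $k_q$, $m_{q+1}$ (and, in the doubled branch, $k_{q+1}$). The entire content of the lemma is this correspondence between the algebra of $\oplus/\otimes$ and the two terms of the Gaussian recursion~\eqref{eqn:recusion_N}.
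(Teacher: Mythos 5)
Your proof is correct and is precisely the argument the paper intends: the paper's own proof is the one-line remark that the recursion~\eqref{eqn:recusion_N} ``provides exactly the induction for the two possibilities of adding one more operation,'' and your write-up simply fills in that induction — the base case $q=1$, the split of $\sum_\ostr$ on the final operation, and the verification that the $\otimes$- and $\oplus$-branches match the $\tr[\bS^{k_{q+1}}]N(q;\cdot)$ and $2N(q;\cdot)$ terms via the $\al$-index bookkeeping. No gap; you have just made explicit what the paper leaves implicit.
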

We note that although the evaluation does not depend on the order of the letters in the final word, the word itself depends very much on the order of the operations $\ostar$ between the letters.
\begin{proof}
The recursion~\eqref{eqn:recusion_N} provide exactly the induction for the two possibilities 
of adding one more operation. 
\end{proof}

\subsection{Calculations of $M(q, \cdot, \cdot)$}
\label{sec:calc_M}
For any natural numbers $q$ and $k_1, k_2, \ldots k_q, m_1, m_2, \ldots m_q$, define
\begin{align}\label{def:KMij}
K_{i,j}:= \sum_{\ell=i}^j k_\ell, \quad M_{i,j}:= \sum_{\ell=i}^j m_\ell, \forall, i\le j, 
\end{align}
and $K_{i,j}=M_{i,j}=0$ for all $i>j$. We use the notation $K_{i,j}^{\sigma,\tau}$
and $M_{i,j}^{\sigma,\tau}$ when $k_i=k^{\sigma,\tau}_i$ and $m_i=m^{\sigma,\tau}_i$.

\begin{lem}
\label{lem:expression_M}
For any $q >1$ we can write:
\begin{align}
&M(q;k_1, k_2, \ldots k_q \big| m_1, m_2, \ldots m_q)\nonumber \\ & \quad = \sum_{t=1}^{q-1} 2^{t-1}N(q-t;k_1, \ldots, k_{q-t}+m_{q-t} \big| m_{q-t-1}, \ldots, m_1, K_{q-t+1,q}+M_{q-t+1,q})
\label{eqn:expression_M}
\end{align}
\end{lem}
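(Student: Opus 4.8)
The plan is to establish \eqref{eqn:expression_M} by induction on $q$, with the recursion \eqref{eqn:recusion_M} serving as the engine and the identity $M(1;k\,|\,m)=N(1;k\,|\,m)$ from Lemma~\ref{lem:recursion} anchoring the bottom of the recursion. For the base case $q=2$ I would apply \eqref{eqn:recusion_M} once, reduce the resulting level-$1$ objects through Lemma~\ref{lem:recursion}, and check that the outcome coincides with the single summand $N(1;k_1+m_1\,|\,K_{2,2}+M_{2,2})$ dictated by the right-hand side of \eqref{eqn:expression_M}. It is also worth recording at the outset the symmetry $M(q;k\,|\,m)=M(q;m\,|\,k)$, immediate from the cyclic invariance of the trace, since it lets one normalize the two $M$-terms below into a comparable shape before invoking the hypothesis.

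For the inductive step I would assume \eqref{eqn:expression_M} at level $q$ and apply \eqref{eqn:recusion_M} to $M(q+1;k_1,\ldots,k_{q+1}\,|\,m_1,\ldots,m_{q+1})$. This splits the quantity into exactly three pieces: the standalone term $N(q;k_1,\ldots,k_q+m_q\,|\,m_{q-1},\ldots,m_1,k_{q+1}+m_{q+1})$, together with the two level-$q$ terms $M(q;k_1,\ldots,k_q+k_{q+1}\,|\,m_1,\ldots,m_q+m_{q+1})$ and $M(q;k_1,\ldots,k_q+m_{q+1}\,|\,m_1,\ldots,m_q+k_{q+1})$. The standalone $N$-term is already in the required form, since $K_{q+1,q+1}+M_{q+1,q+1}=k_{q+1}+m_{q+1}$ makes it precisely the $t=1$ summand of \eqref{eqn:expression_M} at level $q+1$. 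Everything then reduces to showing that the two level-$q$ $M$-terms jointly reproduce the summands indexed $t=2,\ldots,q$.

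The crux is a merge-invariance observation. The two $M$-terms differ from the generic $M(q;\cdot\,|\,\cdot)$ only in their $q$-th indices, and the two competing merges agree in total, $(k_q+k_{q+1})+(m_q+m_{q+1})=(k_q+m_{q+1})+(m_q+k_{q+1})=k_q+m_q+k_{q+1}+m_{q+1}$. When each $M$-term is expanded by the induction hypothesis, the position-$q$ data enters the resulting $N$-summands only through the combined quantity $K_{q-t+1,q}+M_{q-t+1,q}$ of \eqref{def:KMij}; every other argument involves indices strictly below $q$ and is left untouched by either merge. Hence, for each $t$, the two expansions yield the \emph{same} summand $N(q-t;k_1,\ldots,k_{q-t}+m_{q-t}\,|\,m_{q-t-1},\ldots,m_1,K_{q-t+1,q+1}+M_{q-t+1,q+1})$, and adding the two copies turns the coefficient $2^{t-1}$ into $2^{t}$. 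Reindexing by $t'=t+1$ matches these contributions with the $t'=2,\ldots,q$ summands of \eqref{eqn:expression_M} at level $q+1$, closing the induction.

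I expect the main obstacle to be purely the bookkeeping: one must verify that each merge acts only on position $q$, that the induction hypothesis is applied with the arguments shifted precisely as $k_q\mapsto k_q+k_{q+1}$ and $m_q\mapsto m_q+m_{q+1}$ (respectively $k_q\mapsto k_q+m_{q+1}$, $m_q\mapsto m_q+k_{q+1}$), and that the telescoping $K_{q-t+1,q}+M_{q-t+1,q}\mapsto K_{q-t+1,q+1}+M_{q-t+1,q+1}$ holds simultaneously for all $t$. Confirming that the $k$-arguments $k_1,\ldots,k_{q-t}+m_{q-t}$ and the reversed $m$-arguments $m_{q-t-1},\ldots,m_1$ never reach index $q$ (they stop at $q-1$ and $q-2$, respectively) is the delicate point that guarantees the two branches collapse onto a single $N$-term and thereby produces the doubling responsible for the factor $2^{t-1}$.
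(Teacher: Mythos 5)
Your inductive step is essentially the paper's own argument, and as an implication it is valid: the merge-invariance $(k_q+k_{q+1})+(m_q+m_{q+1})=(k_q+m_{q+1})+(m_q+k_{q+1})$ does make the two level-$q$ $M$-terms expand identically under the induction hypothesis (their arguments at positions below $q$ are untouched, and the last slot only sees the total $K_{q-t+1,q+1}+M_{q-t+1,q+1}$), the two copies double each coefficient $2^{t-1}$ into $2^{t}$, and the standalone $N$-term is precisely the $t=1$ summand at level $q+1$. You in fact justify the collapse of the two branches more explicitly than the paper does.

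The genuine gap is the base case, and it is fatal rather than bookkeeping: the check you defer to would fail. Applying \eqref{eqn:recusion_M} with $q=1$ and then Lemma~\ref{lem:recursion} gives
\begin{align*}
M(2;k_1,k_2\,\big|\,m_1,m_2)&=M(1;k_1+k_2\,\big|\,m_1+m_2)+N(1;k_1+m_1\,\big|\,k_2+m_2)\\
&\quad +M(1;k_1+m_2\,\big|\,m_1+k_2),
\end{align*}
i.e.\ \emph{three} level-$1$ terms, whereas \eqref{eqn:expression_M} at $q=2$ asserts only the middle one, $N(1;k_1+m_1\,\big|\,K_{2,2}+M_{2,2})$. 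The two extra terms cannot cancel, since $M(1;k\,\big|\,m)=2\tr(\bS^{k+m})+\tr(\bS^{k})\tr(\bS^{m})>0$. Concretely, at $k_1=k_2=m_1=m_2=1$ the left-hand side is $M(2;1,1\,\big|\,1,1)=6\tr(\bS^4)+3\tr^2(\bS^2)$ --- this is exactly what the paper's own Example~\ref{exm:p=2} records --- while the right-hand side of \eqref{eqn:expression_M} evaluates to $N(1;2\,\big|\,2)=2\tr(\bS^4)+\tr^2(\bS^2)$. So the identity you set out to prove is false as stated: a full unrolling of \eqref{eqn:recusion_M} yields $\sum_{t=1}^{q-1}2^{t-1}N(q-t;\cdots)$ \emph{plus} $2^{q-1}$ residual $M(1)$-terms, and those residuals do not collapse to a single value because they depend on how $K_{2,q}+M_{2,q}$ is split between the two slots of $M(1;\cdot\,\big|\,\cdot)$. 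Note the paper's proof has the same hole --- it runs only the induction step and never verifies any base case --- so no amount of care in the step you describe can anchor the induction; the statement itself needs the terminal $M(1)$ contributions added before a proof can go through.
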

\begin{proof}[Proof by induction] Recall the recursion in~\eqref{eqn:recusion_M}
\begin{align*}
&M(q+1;k_1, k_2, \ldots k_q, k_{q+1} \big| m_1, m_2, \ldots m_q, m_{q+1})\nonumber
\\ & \quad = 
M(q; k_1, k_2, \ldots k_q+k_{q+1} \big| m_1, m_2, \ldots m_q+m_{q+1})+\nonumber
\\ & \qquad +  
N(q; k_1, k_2, \ldots k_q+m_{q}\big| m_{q-1}, \ldots m_1, k_{q+1}+m_{q+1})+\nonumber\\ & \qquad +  M(q; k_1, k_2, \ldots k_q+m_{q+1} \big| m_1, m_2, \ldots m_q+k_{q+1}).
\end{align*}
Apply the induction assumption, we have, 
\begin{align*}
&M(q+1;k_1, k_2, \ldots k_q, k_{q+1}\big| m_1, m_2, \ldots m_q, m_{q+1})
\\ & \quad = 
\sum_{t=1}^{q-1} 2^{t-1}N(q-t;k_1, \ldots, k_{q-t}+m_{q-t} \big| m_{q-t-1}, \ldots, m_1, K_{q-t+1,q+1}+M_{q-t+1,q+1})+
\\ & \qquad +
\sum_{t=1}^{q-1} 2^{t-1}N(q-t; k_1, \ldots, k_{q-t}+m_{q-t} \big| m_{q-t-1}, \ldots, m_1, K_{q-t+1,q+1}+M_{q-t+1,q+1})+
\\ & \qquad +  
N(q; k_1, k_2, \ldots k_q+k_{q+1} \big| m_q, m_{q-1}, \ldots m_1+m_{q+1})
\\ & \quad =
\sum_{t=1}^{q} 2^{t-1}N(q-t;k_1, \ldots, k_{q-t}+m_{q-t} \big| m_{q-t-1}, \ldots, m_1, K_{q-t+1,q+1}+M_{q-t+1,q+1}).
\end{align*}
\end{proof}

\section{Main Results}

\subsection{Calculations of the Variance}

Leveraging the theoretical results presented so far, we can now calculate the variance 
$V_p^n$ explicitly using the notation of~\eqref{eqn:sum ostar}.

\begin{thm}[Representation of the variance]\label{thm:varrep}
The variance of $V_p^n$  is equal to
{\rm
\begin{align*}
&\binom{n}{p}^{-2}\left[\binom{n}{2p}\binom{2p}{p}\tr^2(\bS^p)+\binom{n}{2p-1}\binom{2p-1}{1}\binom{2p-2}{p-1}[2\tr(\bS^{2p})+\tr^2(\bS^p)]\right.
\\ & \left.+\sum_{\sigma, \tau\in \pnCycles, |\gamma|\ge 2}\sum_{t=1}^{q-1} 2^{t-1}\sum_{\ostar}
\eV(s_{\beta^{q-t}_0(\sigma, \tau)}\ostar s_{\beta^{q-t}_1(\sigma, \tau)}\ostar\ldots \ostar s_{\beta^{q-t}_{q-t}(\sigma, \tau)})\right]-\tr^2(\bS^p),
\end{align*}
}
where $\beta_0^{q-t}(\sigma, \tau):= K_{q-t+1,q}^{\sigma, \tau}+M_{q-t+1,q}^{\sigma, \tau}$, 
$\beta_i^{q-t}(\sigma, \tau)=k^{\sigma, \tau}_i+m^{\sigma, \tau}_{i}, i=1,\ldots, q-t-1$, 
$\beta^{q-t}_{q-t}(\sigma, \tau)= k_{q-t}^{\sigma, \tau}+m_{q-t}^{\sigma, \tau}$; by convention $\binom{n}{m}=0$ for $n<m$.
\end{thm}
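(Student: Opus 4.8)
The plan is to begin from $\mathrm{Var}(V_p^n)=\EX[(V_p^n)^2]-(\EX[V_p^n])^2$ and to use that $V_p^n$ is unbiased for $\tr(\bS^p)$, so that the second term is exactly $\tr^2(\bS^p)$ — precisely the quantity subtracted at the end of the statement. For the first term I would expand the square via~\eqref{eqn:defn_Vp},
\[
\EX[(V_p^n)^2]=\binom{n}{p}^{-2}\sum_{\sigma,\tau\in\pnCycles}\EX\big[(\bX\bS\bX^\top)_{[\sigma]}(\bX\bS\bX^\top)_{[\tau]}\big],
\]
and partition the double sum according to the size $q=|\sigma\cap\tau|$ of the overlap of the two increasing $p$-cycles. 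Each cross-term depends on the pair only through the overlap data $(q;k_i,m_i)$, so by Lemma~\ref{lem:twocycles} the whole computation reduces to evaluating the cross-terms and counting the pairs that realize each overlap pattern.

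Next I would dispose of the two degenerate overlaps. For $q=0$, Lemma~\ref{lem:twocycles} gives the value $\tr^2(\bS^p)$, and the number of ordered pairs of disjoint $p$-subsets of $[n]$ is $\binom{n}{2p}\binom{2p}{p}$ (choose the $2p$ used indices, then split them into $\sigma$ and $\tau$); this reproduces the first bracketed term. For $q=1$, Lemma~\ref{lem:twocycles} collapses the cross-term to $M(1;k_1+k_0\,|\,m_1+m_0)=M(1;p\,|\,p)$ since $k_0+k_1=m_0+m_1=p$, and Lemma~\ref{lem:recursion} evaluates this as $2\tr(\bS^{2p})+\tr^2(\bS^p)$. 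Counting the pairs with exactly one shared index ($\binom{n}{2p-1}$ for the union, $\binom{2p-1}{1}$ for the shared index, $\binom{2p-2}{p-1}$ to split the rest) yields the second bracketed term.

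For the main range $q\ge 2$ I would keep the sum over pairs intact. Lemma~\ref{lem:twocycles} writes each cross-term as a single trace, and the cyclic property of the trace (the elementary observation of Section~\ref{sec: tracelemma}) merges the two boundary blocks $\bS^{k_0}$ and $\bS^{m_0}$ into the terminal exponents, so the cross-term equals $M(q;k_1,\ldots,k_{q-1},k_q+k_0\,|\,m_1,\ldots,m_{q-1},m_q+m_0)$. Applying Lemma~\ref{lem:expression_M} expands this into the alternating-weighted sum $\sum_{t=1}^{q-1}2^{t-1}N(q-t;\cdots)$, and Lemma~\ref{lem:N_in_beta} then rewrites each $N$ as $\sum_{\ostar}\eV(s_{\beta^{q-t}_0}\ostar\cdots\ostar s_{\beta^{q-t}_{q-t}})$. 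Matching the arguments of $N$ against the $\al$-coefficients of Lemma~\ref{lem:N_in_beta} — and using that the reversed $m$-ordering in $N$ turns the shift $\al_i=k_i+m_{i+1}$ into $\beta_i=k_i+m_i$ — identifies the interior coefficients $\beta^{q-t}_i=k_i+m_i$ and the terminal coefficient $\beta^{q-t}_{q-t}=k_{q-t}+m_{q-t}$, while the leading coefficient $\beta^{q-t}_0$ collects the merged final gap $K_{q-t+1,q}+M_{q-t+1,q}$ together with the wrapped boundary pieces $k_0,m_0$ that were absorbed into the terminal block.

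The hard part is this last index bookkeeping: one must follow, through the two successive recursions, how the cyclic wrap-around of the cycle (the fusion of the $k_0,k_q$ and $m_0,m_q$ gaps into one terminal block) feeds into $\beta_0$, and verify that the interior coefficients emerge as $k_i+m_i$ rather than a shifted pairing. A useful consistency check here is the exponent budget: the word $s_{\beta_0}\ostar\cdots\ostar s_{\beta_{q-t}}$ must carry total index $\sum_{i=0}^{q}(k_i+m_i)=2p$, which forces $\beta_0$ to account for the residual $k_0+m_0$ not covered by the interior terms. Once the three contributions ($q=0$, $q=1$, $q\ge 2$) are assembled inside $\binom{n}{p}^{-2}[\cdots]$ and the bias term $\tr^2(\bS^p)$ is subtracted, the stated representation follows.
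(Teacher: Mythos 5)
Your proposal is correct and follows essentially the same route as the paper's own proof: the same expansion of $\EX[(V_p^n)^2]$ over pairs of increasing $p$-cycles, the same counts for the $q=0$ and $q=1$ overlaps, and the same chain of Lemma~\ref{lem:twocycles}, Lemma~\ref{lem:expression_M} and Lemma~\ref{lem:N_in_beta} for $q\ge 2$. Your bookkeeping of the boundary exponents --- that $\beta^{q-t}_0$ must absorb $k_0+m_0$ so that the total index of each word equals $2p$ --- is in fact more explicit than the paper's, which writes $K^{\sigma,\tau}_{q-t+1,q}+M^{\sigma,\tau}_{q-t+1,q}$ under the implicit convention that $k_0,m_0$ have already been merged into $k_q,m_q$.
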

\begin{proof}
The desired variance is of the form 
\begin{align*}
\binom{n}{p}^{-2}\sum_{\sigma, \tau\in \pnCycles}\EX[(\bX \bS \bX^\top)_{[\sigma]}(\bX \bS \bX^\top)_{[\tau]}]-\tr^2(\bS^p).
\end{align*}
Let $q$ denote the cardinality of $\gamma=\sigma\cap \tau$. When $q=0$, 
$\EX[(\bX \bS \bX^\top)_{[\sigma]}(\bX \bS \bX^\top)_{[\tau]}]=\tr^2(\bS^p)$, and there are $\binom{n}{2p}\binom{2p}{p}$ such terms. When $q=1$, 
$\EX[(\bX \bS \bX^\top)_{[\sigma]}(\bX \bS \bX^\top)_{[\tau]}]=2\tr(\bS^{2p})+\tr^2(\bS^p)$, and there are $\binom{n}{2p-1}\binom{2p-1}{1}\binom{2p-2}{p-1}$ such terms. For $q\ge 2$, as the consequence of Lemmata~\ref{lem:twocycles},~\ref{lem:recursion}, ~\ref{lem:N_in_beta} and~\ref{lem:expression_M}, we have,
\begin{align*}
&\EX[(\bX \bS \bX^\top)_{[\sigma]}(\bX \bS \bX^\top)_{[\tau]}]\\
=&M(q;k^{\sigma, \tau}_1, k_2^{\sigma, \tau}, \ldots k_q^{\sigma, \tau}+k_0^{\sigma, \tau}\big|m_1^{\sigma, \tau}, m_2^{\sigma, \tau}, \ldots m_q^{\sigma, \tau}+m_0^{\sigma, \tau}) \\ =&\sum_{t=1}^{q-1} 2^{t-1}N(q-t;k_1^{\sigma, \tau}, \ldots, k_{q-t}^{\sigma, \tau}+m_{q-t}^{\sigma, \tau} \big| m_{q-t-1}^{\sigma, \tau}, \ldots, m_1^{\sigma, \tau}, K_{q-t+1,q}^{\sigma, \tau}+M_{q-t+1,q}^{\sigma, \tau})
\\ =&\sum_{t=1}^{q-1} 2^{t-1}\sum_{\ostar}
\eV(s_{\beta^{q-t}_0(\sigma, \tau)}\ostar s_{\beta^{q-t}_1(\sigma, \tau)}\ostar\ldots \ostar s_{\beta^{q-t}_{q-t}(\sigma, \tau)}).
\end{align*}
\end{proof}

\subsection{Bounds on the Variance}

Let us start with an example for $p=2$. Note that $p=1$ is the case of Frobenius norm, which has been extensively studied. 

\begin{example}
\label{exm:p=2}
The estimated average is $\tr(\bS^2)$. Now consider the second moment of estimate consists of: 1) $\binom{n}{4}\binom{4}{2}$ terms of two non-overlapping increasing cycles; 2) $\binom{n}{3}3!$ terms of two increasing cycles with one common index; 3) $\binom{n}{2}$ terms with two same increasing cycles. Easy to see that, for 1), each term equals to $\tr^2(\bS^2)$, for 2) each term equals to $M(1;1|1)=2\tr(\bS^4)+\tr^2(\bS^2)$ and for 3), each term equals to $M(2;1,1|1,1)=6\tr(\bS^4)+3\tr^2(\bS^2)$. 
Put them together, we know that, the second moment equals,
\begin{align*}
\binom{n}{2}^{-2} \left\{ \binom{n}{4}\binom{4}{2}\tr^2(\bS^2)+ \binom{n}{3}3![2\tr(\bS^4)+\tr^2(\bS^2)]+\binom{n}{2}[6\tr(\bS^4)+3\tr^2(\bS^2)]\right\}.
\end{align*}
Meanwhile, we know that, 
\begin{align*}
\tr^2(\bS^2)/d\le \tr(\bS^4)\le \tr^2(\bS^2),
\end{align*}
with the first inequality being the result of the Cauchy-Schwarz inequality and the second one follows from the non-negativity of the cross term in a complete square. Thus,
the second moment is lower and upper bounded by,
\begin{align*}
\left\{\binom{n}{2}^{-2} \left[\binom{n}{3}\frac{2 \cdot 3!}{d}+\binom{n}{2}\left(\frac{6}{d} +2\right)\right] +1\right\}\tr^2(\bS^2)
=\left[\frac{4(2n+d-1)}{dn(n-1)}+1\right]\tr^2(\bS^2)\end{align*}
and 
\begin{align*}
\left\{\binom{n}{2}^{-2} \left[2\binom{n}{3} 3!+8\binom{n}{2} \right] +1\right\}\tr^2(\bS^2)=\left[\frac{8n}{n(n-1)}+1\right]\tr^2(\bS^2),
\end{align*}
respectively. Both bounds means that the variance is in the order of $O(\frac{1}{n})$. 
\end{example}

In general, as a consequence of the H\"older's inequality, for $0< \beta \le p$, we have
\begin{align}
\label{eqn:trace_relation_1}
\tr^{\frac{\beta}{p}}(\bS^p)\le \tr(\bS^\beta) \le \tr^{\frac{\beta}{p}}(\bS^p) d^{1-\frac{\beta}{p}}.
\end{align}
Similarly, for $\beta \ge p$, we have, 
\begin{align}
\label{eqn:trace_relation_2}
\tr^{\frac{\beta}{p}}(\bS^p) d^{1-\frac{\beta}{p}}\le \tr(\bS^\beta) \le \tr^{\frac{\beta}{p}}(\bS^p).
\end{align}
These inequalities lead to the following bound on $\eV(s_{\beta_0}\ostr s_{\beta_1}\ostr \ldots \ostr s_{\beta_q})$.
\begin{lem}
\label{lem:bound_on_N}
For any $1\le q \le p$, if $\al_j\le p$, $j=0,1,\ldots,q$
\begin{align}
\label{eqn:bound_on_N}
S_p^2  3^q\le N(q, k_1,k_2, \ldots, k_q \big| m_1,m_2, \ldots, m_q)
\le S_p^2 d^{q-1}3^q.
\end{align}
Otherwise, we have, 
\begin{align}
\label{eqn:bound_on_N_1}
S_p^2  3^q/d\le N(q, k_1,k_2, \ldots, k_q \big| m_1,m_2, \ldots, m_q)
\le S_p^2 d^{q}3^q.
\end{align}
\end{lem}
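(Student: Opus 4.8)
Lemma~\ref{lem:N_in_beta} expresses
\[
N(q;k_1,\ldots,k_q\,|\,m_q,\ldots,m_1)=\sum_{\ostar}\eV\bigl(s_{\al_0}\ostar s_{\al_1}\ostar\cdots\ostar s_{\al_q}\bigr),
\]
the sum running over all $2^{q}$ ways of replacing each of the $q$ symbols $\ostar$ by $\oplus$ or $\otimes$. Resolving one such choice yields a word $2^{\#\oplus}s_{j_0}\cdots s_{j_{\#\otimes}}$ whose block indices $j_\ell$ are the sums over the maximal runs of the $\al_i$ joined by $\oplus$, and these always satisfy $\sum_\ell j_\ell=\sum_{i=0}^q\al_i=2p$, the last equality being the normalization inherited from the two $p$-cycles (so that $\sum_i k_i+\sum_i m_i=2p$) and the first because $\oplus$ preserves the index sum. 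Hence each choice contributes $2^{\#\oplus}\prod_\ell S_{j_\ell}$, and since $\sum_{\ostar}2^{\#\oplus}=\sum_{b=0}^{q}\binom{q}{b}2^{b}=3^{q}$, the plan is to establish a bound on $\prod_\ell S_{j_\ell}$ that is uniform over all $2^{q}$ choices and then factor the constant $3^{q}$ out of the sum.

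The structural fact that makes a uniform estimate possible is that \emph{at most one} block index can exceed $p$: two blocks with $j_\ell>p$ would force $\sum_\ell j_\ell>2p$. Granting this, I would bound $\prod_\ell S_{j_\ell}$ by applying~\eqref{eqn:trace_relation_1} to every block with $j_\ell\le p$ and~\eqref{eqn:trace_relation_2} to the unique block (if any) with $j_\ell>p$, i.e. $S_j\le S_p^{j/p}d^{\max\{1-j/p,\,0\}}$ and $S_j\ge S_p^{j/p}d^{\min\{1-j/p,\,0\}}$. Multiplying over $\ell$, the exponents of $S_p$ telescope to $(\sum_\ell j_\ell)/p=2$ in every case, so that $\prod_\ell S_{j_\ell}=S_p^{2}d^{E}$ for an exponent $E$ depending on the choice, and the whole problem reduces to controlling $E$ uniformly.

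For the upper bound I split on the hypothesis. When all $\al_j\le p$, the merge-free choice keeps every block a singleton $\le p$ and realizes the largest value $E=(q+1)-2=q-1$; any $\oplus$ either removes a small block or feeds the unique large block, and a short case check shows $E\le q-1$ throughout, whence $\prod_\ell S_{j_\ell}\le S_p^2 d^{q-1}$ and $N\le S_p^2 d^{q-1}3^{q}$. When some $\al_j>p$, every choice already carries the unique large block, the surviving small blocks are at most $q$ in number and sum to less than $p$, so $E\le q$ and $N\le S_p^2 d^{q}3^{q}$. For the lower bounds the same computation run with the reversed inequalities shows that the only possible loss comes from the single large block, whose index is at most $2p$ and therefore contributes a factor at least $d^{1-2p/p}=d^{-1}$; this gives $\prod_\ell S_{j_\ell}\ge S_p^2 d^{-1}$ uniformly, and after multiplying by $3^{q}$ the stated lower bounds follow (in the all-small regime the merge-free choices already contribute the full $S_p^2$).

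The step I expect to be the real obstacle is controlling $E$ simultaneously over all $2^{q}$ merge patterns. The clean extraction of $3^{q}$ rests on a per-choice estimate that does not degrade as blocks coalesce, and proving this means combining two facts in tandem: that each $\oplus$ lowers the block count by exactly one, and that however the $\al_i$ are grouped at most one block can cross the threshold $p$, all the while keeping $\sum_\ell j_\ell=2p$ fixed so that only the power of $d$, never the power of $S_p$, moves. Verifying that the merge-free pattern (for the upper bound) and the configuration with the one oversized block (for the lower bound) are the extremal ones, and that every intermediate pattern is dominated by them, is the crux.
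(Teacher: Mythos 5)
Your proof takes the same route as the paper's: expand $N$ via Lemma~\ref{lem:N_in_beta}, bound each evaluated word through \eqref{eqn:trace_relation_1} and \eqref{eqn:trace_relation_2}, and use $\sum_{\ostr}2^{\#\oplus}=3^q$ to extract the factor $3^q$. Your two upper bounds are correct, and your block analysis (index sum fixed at $2p$, hence at most one block above $p$) is if anything more explicit than the paper's, which instead first reduces every word to the merge-free one via $s_a\oplus s_b\le 2\,s_a\otimes s_b$ and only then applies \eqref{eqn:trace_relation_1}. Your lower bound for \eqref{eqn:bound_on_N_1} is also fine, since the uniform per-word estimate $2^{\#\oplus}S_p^2/d$ sums to exactly $3^qS_p^2/d$.

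The gap is the lower bound of \eqref{eqn:bound_on_N}. Your uniform estimate again gives only $3^qS_p^2/d$, and the parenthetical remark that ``the merge-free choices already contribute the full $S_p^2$'' cannot close it: that concerns a single one of the $2^q$ words, whereas the claimed bound needs every word with $b$ merges to contribute at least $2^{b}S_p^2$. Your own analysis shows precisely why this fails: even when all $\al_j\le p$, a merge can create a block with index above $p$ (for $q=1$ the hypothesis together with the normalization forces $\al_0=\al_1=p$, so the fully merged word is $2s_{2p}$), and that block costs a factor $d^{-1}$ by \eqref{eqn:trace_relation_2}. Indeed the bound is false as stated: for $q=1$, $k_1=m_1=p$ and $\bS=I_d$,
\begin{align*}
N(1;p\,\big|\,p)=2\tr(\bS^{2p})+\tr^2(\bS^p)=2d+d^2<3d^2=3S_p^2\quad\text{for } d\ge 2,
\end{align*}
although $\al_0=\al_1=p$ satisfies the hypothesis of \eqref{eqn:bound_on_N}. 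So the failure here is not really yours: the paper's own proof asserts the per-word lower bound $2^{\#\oplus}\tr^2(\bS^p)$ without justification and has exactly the same defect, and the honest output of this method is $3^qS_p^2/d$ on the left-hand side of \eqref{eqn:bound_on_N} as well. Note that this does not propagate further: Theorem~\ref{thm:upper_bound} uses only the upper bounds of Lemmata~\ref{lem:bound_on_N} and~\ref{lem:bound_on_M}.
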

\begin{proof}
In the case that $\al_j\le p$, $j=0,1,\ldots,q$, inequality~\eqref{eqn:trace_relation_1} and the fact that $s_{\al_i}\oplus s_{\al_j}\le 2s_{\al_i}\otimes s_{\al_j}$ imply that,
\begin{align*}
2^{\text{\# of $\oplus$}}\tr^2(\bS^p)\le \eV(s_{\al_0}\ostr s_{\al_1}\ostr \ldots \ostr s_{\al_q}) \le d^{q-1}2^{\text{\# of $\oplus$}}\tr^2(\bS^p).
\end{align*}
Should one of the $\al_j>p$, then, the above arguments leads to
\begin{align*}
d^{-1}2^{\text{\# of $\oplus$}}\tr^2(\bS^p)\le \eV(s_{\al_0}\ostr s_{\al_1}\ostr \ldots \ostr s_{\al_q} )\le d^{q}2^{\text{\# of $\oplus$}}\tr^2(\bS^p).
\end{align*}
The bounds~\eqref{eqn:bound_on_N} and~\eqref{eqn:bound_on_N_1} follow from the summation expression in Lemma~\ref{lem:N_in_beta}.
\end{proof}
\begin{lem}
\label{lem:bound_on_M}
For $q\le p/2$, we have, 
\begin{align}
\label{eqn:bound_on_M}
\frac{3^q-2^2}{d^2}S_p^2 \le M(q, k_1,k_2, \ldots, k_q \big| m_1,m_2, \ldots, m_q)
\le S_p^2 3^{q-1}d^{q-1}\frac{1-\left(\frac{2}{3d}\right)^{q-1}}{1-\frac{2}{3d}}.
\end{align}
Otherwise, we have
\begin{align}
\label{eqn:bound_on_M_1}
\frac{3^q-2^2}{d}S_p^2 \le M(q, k_1,k_2, \ldots, k_q \big| m_1,m_2, \ldots, m_q)
\le S_p^2 3^{q-1}d^{q-2}\frac{1-\left(\frac{2}{3d}\right)^{q-1}}{1-\frac{2}{3d}}.
\end{align}
\end{lem}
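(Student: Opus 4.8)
The plan is to push the expansion of $M$ into $N$-terms supplied by Lemma~\ref{lem:expression_M} through the two-sided estimates of Lemma~\ref{lem:bound_on_N}, and then to collapse the outcome to closed form by summing a geometric series. First I would record, from Lemma~\ref{lem:expression_M}, that $M(q;k_1,\dots,k_q\mid m_1,\dots,m_q)=\sum_{t=1}^{q-1}2^{t-1}N(q-t;\cdots)$, a nonnegative combination of $N$'s of orders $q-1,q-2,\dots,1$ carrying weights $2^{t-1}$. The $t$-th summand is an $N$ of order $q-t$ whose exponent list consists of the $q-t$ values $k_i+m_i$ together with the single aggregated exponent $\beta_0=K_{q-t+1,q}+M_{q-t+1,q}=\sum_{i>q-t}(k_i+m_i)$; crucially, these exponents sum to $\sum_{i=1}^q(k_i+m_i)=2p$, which is exactly the normalization under which Lemma~\ref{lem:bound_on_N} yields the prefactor $S_p^2=\tr^2(\bS^p)$.

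Next I would estimate each summand by Lemma~\ref{lem:bound_on_N}, the decisive question being which of its two branches applies; this is where the dichotomy $q\le p/2$ versus $q>p/2$ originates. Using $\sum_i k_i=\sum_i m_i=p$ together with $k_i,m_i\ge1$, each individual exponent obeys $k_i+m_i\le 2(p-q+1)$, which is $\le p$ exactly when $q>p/2$. Thus for $q>p/2$ the sharper branch $N(q-t;\cdots)\le S_p^2 d^{\,q-t-1}3^{q-t}$ (and its matching lower estimate) applies, whereas for $q\le p/2$ some exponent may exceed $p$, forcing the weaker branch $N(q-t;\cdots)\le S_p^2 d^{\,q-t}3^{q-t}$; this single shift in the power of $d$ is precisely what distinguishes \eqref{eqn:bound_on_M} from \eqref{eqn:bound_on_M_1}, and the two branches rest respectively on \eqref{eqn:trace_relation_1} and \eqref{eqn:trace_relation_2}.

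Finally I would sum. On the upper side, reindexing $\sum_{t=1}^{q-1}2^{t-1}d^{\,q-t}3^{q-t}$ by $j=t-1$ gives $S_p^2(3d)^{q-1}\sum_{j=0}^{q-2}(2/(3d))^j=S_p^2 3^{q-1}d^{q-1}\frac{1-(2/(3d))^{q-1}}{1-2/(3d)}$, i.e.\ exactly the envelope in \eqref{eqn:bound_on_M} (with $d^{q-1}$ replaced by $d^{q-2}$ in the $q>p/2$ case). On the lower side, since every summand is nonnegative I would retain the crude per-term lower estimates of Lemma~\ref{lem:bound_on_N}, which contribute the geometric sum $\sum_{t=1}^{q-1}2^{t-1}3^{q-t}=3^q-3\cdot2^{q-1}$ times the relevant inverse power of $d$, and I would then weaken this numerator to the cleaner $3^q-2^2$ stated in the lemma.

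The main obstacle is the middle step: the branch of Lemma~\ref{lem:bound_on_N} has to be selected per term, and the genuinely delicate point is the aggregated exponent $\beta_0$, which for the low-order (large $t$) summands can sit just above $p$ even when $q>p/2$. Because those summands carry a subleading power of $d$, one can absorb their slight excess into the geometric envelope; verifying that this absorption still lands on the advertised closed forms — rather than on a term-by-term-tight but unwieldy expression — together with checking that the final numerator weakening $3^q-3\cdot 2^{q-1}\rightsquigarrow 3^q-2^2$ is valid in the stated range of $d$, is what requires care.
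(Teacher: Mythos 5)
Your strategy coincides with the paper's own proof: expand $M$ through Lemma~\ref{lem:expression_M} into $\sum_{t=1}^{q-1}2^{t-1}N(q-t;\cdot)$, bound each summand by one branch of Lemma~\ref{lem:bound_on_N} according to whether all exponents stay $\le p$, and sum the geometric series. The paper does exactly this, only more tersely (it asserts outright that for $q>p/2$ no exponent can exceed $p$, and it never displays the summation). So your reconstruction is faithful, and you have correctly located the two weak points. The problem is that both points you defer as ``requiring care'' are genuine gaps, not technicalities, and neither can be closed in the way you propose.

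Concretely: (a) the aggregated exponent. For the summand $t=q-1$, the $N(1)$ term has exponents $k_1+m_1$ and $\beta_0=\sum_{i\ge2}(k_i+m_i)=2p-(k_1+m_1)$, so $\beta_0>p$ whenever $k_1+m_1<p$, which happens even for $q>p/2$ (take $p=10$, $q=6$, $k_i=m_i=1$ for $i\le5$, $k_6=m_6=5$, giving $\beta_0=18$). Your absorption argument then fails because the claimed envelope $\sum_{t=1}^{q-1}2^{t-1}3^{q-t}d^{q-t-1}S_p^2$, which is exactly the right-hand side of \eqref{eqn:bound_on_M_1}, is assembled from the \emph{sharp} branch for every term and so has no slack: replacing even one term by its weak-branch bound multiplies that term by $d$, and the total then strictly exceeds the stated bound for $d\ge2$. (b) The lower bound. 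The term-by-term sum is, as you compute, $3^q-3\cdot2^{q-1}$, which for every $q\ge2$ is \emph{smaller} than $3^q-2^2$; a lower bound cannot be weakened upward, so the stated constant does not follow (it could at best be rescued, for $d\ge 2$, by spending the spare factor of $d$ between the derived $1/d$ and the stated $1/d^2$ in \eqref{eqn:bound_on_M}, an argument neither you nor the paper makes, and one which is unavailable for \eqref{eqn:bound_on_M_1} where no spare factor exists). Both defects are present, silently, in the paper's proof as well. Indeed the difficulty is not repairable along these lines at all: by Example~\ref{exm:p=2}, $M(2;1,1\,|\,1,1)=6\tr(\bS^4)+3\tr^2(\bS^2)$, which always exceeds the bound $3\tr^2(\bS^2)$ that \eqref{eqn:bound_on_M_1} asserts for $p=2$, $q=2$; the root cause is that Lemma~\ref{lem:expression_M}, on which both you and the paper rely, already disagrees with the recursion \eqref{eqn:recusion_M} (for $q=2$ it produces one $N(1)$ term where the recursion produces three), so any honest proof must first revisit that lemma.
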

\begin{proof}
From~\eqref{eqn:N_in_beta}, we know that, in the expression in Lemma~\ref{lem:N_in_beta}, $\al_j\ge 2$, $j=1,\ldots,q-1$. Meanwhile, recursion~\eqref{eqn:recusion_M} indicates that $\al_0$ and $\al_q$ will be at least $2$ as well. So, when $q>p/2$, there is no $\al_i$ that can exceed $p$. Hence, the bound~\eqref{eqn:bound_on_M_1} follows from  summing over the bounds~\eqref{eqn:bound_on_N_1} according to the expression in Lemma~\ref{lem:expression_M}. Otherwise, the bounds~\eqref{eqn:bound_on_N_1} will be summed over to get inequality~\eqref{eqn:bound_on_M_1}. 
\end{proof}

\begin{thm}
\label{thm:upper_bound}
The variance can be upper bounded by $(B_1+B_2+B_3+B_4)\tr^2(\bS^p)$, with
\begin{align*}
B_1=&\left\{\begin{array}{cc}\prod_{k=0}^{p-1}\frac{n-k-p}{n-k}-1, & n\ge 2p\\ 0, & n <2p, \end{array}\right. 
\\
B_2=&\left\{\begin{array}{cc}\frac{(n-p)!(n-p)!}{n!(n-2p)!}\frac{p^2}{n-2p+1}, & n\ge 2p\\ 0, &n <2p,\end{array}\right.
\\
B_3=&\frac{2}{3d^2}\left[\left(\frac{3pd}{n}+1\right)^p- \frac{3p^2d}{n}-1\right],
\\
B_4=&\frac{2^p(d-1)}{3d^2}d^{p/2}\left(\frac{3p}{n}\right)^{p/2}.
\end{align*}
\end{thm}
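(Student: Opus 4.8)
The plan is to begin from the exact variance formula of Theorem~\ref{thm:varrep} and organize the double sum over pairs of increasing $p$-cycles according to the overlap size $q=|\sigma\cap\tau|$, so that the ranges $q=0$, $q=1$, and $q\ge 2$ produce $B_1$, $B_2$, and $B_3+B_4$ respectively. The first ingredient I would record is the number of pairs $(\sigma,\tau)$ with a prescribed overlap $q$: choosing the $q$ shared indices, then $p-q$ further indices for $\sigma$ and $p-q$ for $\tau$ from the remaining ones gives $\binom{n}{q}\binom{n-q}{p-q}\binom{n-p}{p-q}$ such pairs. Two elementary identities then carry most of the bookkeeping, namely $\binom{n}{2p}\binom{2p}{p}=\binom{n}{p}\binom{n-p}{p}$ for the disjoint case and $\binom{n}{q}\binom{n-q}{p-q}=\binom{n}{p}\binom{p}{q}$ in general, the latter reducing the normalized count to $\binom{n}{p}^{-2}\cdot(\text{count})=\binom{p}{q}\binom{n-p}{p-q}/\binom{n}{p}$.

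For $q=0$ the summand is exactly $\tr^2(\bS^p)$; combining its prefactor $\binom{n}{p}^{-2}\binom{n}{2p}\binom{2p}{p}$ with the subtracted mean square $(\EX V_p^n)^2=\tr^2(\bS^p)$ and using the first identity collapses the coefficient to $\prod_{k=0}^{p-1}\frac{n-p-k}{n-k}$, producing precisely $B_1\tr^2(\bS^p)$; when $n<2p$ there are no disjoint pairs, so this contribution and hence $B_1$ vanish. For $q=1$ the summand equals $2\tr(\bS^{2p})+\tr^2(\bS^p)$; simplifying its count to the factor $B_2$ and bounding $\tr(\bS^{2p})\le\tr^2(\bS^p)$ via~\eqref{eqn:trace_relation_2} accounts for the $\tr^2(\bS^p)$ part, the residual $\tr(\bS^{2p})$ part being folded into the estimate of the higher-overlap blocks.

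The heart of the argument is the range $q\ge 2$. Here I would bound each block $M(q;\cdot\mid\cdot)$ by Lemma~\ref{lem:bound_on_M}, insert the normalized count $\binom{p}{q}\binom{n-p}{p-q}/\binom{n}{p}$, and bound the remaining binomial ratio by a factor of order $(p/n)^q$. After these substitutions the generic term is proportional to $\binom{p}{q}(3pd/n)^q$, and summing over $q$ recognizes the binomial expansion of $(1+3pd/n)^p$ with the $q=0,1$ terms removed; the two cases of Lemma~\ref{lem:bound_on_M}, which differ by one power of $d$ according to whether $q\le p/2$ or $q>p/2$ (equations~\eqref{eqn:bound_on_M} and~\eqref{eqn:bound_on_M_1}), yield respectively the prefactor $\frac{2}{3d^2}$ in $B_3$ and the $\frac{1}{d}$-scaled tail collected into $B_4$. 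Adding the four pieces gives the asserted bound $(B_1+B_2+B_3+B_4)\tr^2(\bS^p)$.

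I expect the principal obstacle to lie in this last summation: bounding the binomial ratios $\binom{n-p}{p-q}/\binom{n}{p}$ uniformly in $q$, controlling the geometric factor $\frac{1-(2/(3d))^{q-1}}{1-2/(3d)}$ carried by Lemma~\ref{lem:bound_on_M}, and assembling the two $d$-regimes into the exact closed forms $B_3$ and $B_4$. A secondary delicate point is bookkeeping the $\tr(\bS^{2p})$-type leakage from the $q=1$ block together with the $q\ge 2$ contributions, so that the generous per-block bounds are large enough to dominate while still summing to the stated coefficients; verifying the near-cancellation $B_1+B_2=O(1/n^2)$ is a useful consistency check here.
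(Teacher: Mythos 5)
Your proposal follows essentially the same route as the paper's proof: decompose the sum in Theorem~\ref{thm:varrep} by overlap size $q=|\sigma\cap\tau|$, reduce the pair count to $\binom{p}{q}\binom{n-p}{p-q}\big/\binom{n}{p}$ (your direct count $\binom{n}{q}\binom{n-q}{p-q}\binom{n-p}{p-q}$ equals the paper's $\binom{n}{2p-q}\binom{2p-q}{q}\binom{2p-2q}{p-q}$, and Lemma~\ref{lem:tech1} is exactly your reduction identity), bound each $q\ge 2$ block by Lemma~\ref{lem:bound_on_M}, control the binomial ratio by $(p/n)^q$ as in Lemma~\ref{lem:tech2}, and resum the resulting series $\sum_q \binom{p}{q}(3pd/n)^q$ into closed form; the $q=0$ and $q=1$ blocks give $B_1$ and $B_2$ as in the paper.

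Two details in your sketch deviate from what actually works. First, the $q=1$ block: your plan to fold the residual $2B_2\tr(\bS^{2p})$ into the estimates of the higher-overlap blocks cannot be executed --- those estimates bound disjoint groups of terms and have no slack reserved to absorb anything from $q=1$. The only available move is $\tr(\bS^{2p})\le\tr^2(\bS^p)$ from~\eqref{eqn:trace_relation_2}, which bounds the $q=1$ contribution by $3B_2\tr^2(\bS^p)$ rather than $B_2\tr^2(\bS^p)$; your instinct that something is unaccounted for here is correct (the paper's own proof also computes only the coefficient $B_2$ and silently drops the $2\tr(\bS^{2p})$ part, so the stated $B_2$ is loose by a factor of up to $3$), but your proposed repair is not a valid substitute. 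Second, your attribution of the two regimes of Lemma~\ref{lem:bound_on_M} to $B_3$ and $B_4$ is reversed: $B_3$, with prefactor $2/(3d^2)$, comes from applying the weaker baseline $d^{q-2}$ bound of~\eqref{eqn:bound_on_M_1} uniformly over all $2\le q\le p$ (since $d^{q-2}3^{q-1}=\tfrac{1}{3d^2}(3d)^q$ resums to the binomial expression), while $B_4$ collects the excess $(d^{q-1}-d^{q-2})$ contributed only by the small-overlap regime $q<p/2$ of~\eqref{eqn:bound_on_M}, which is where the factor $d^{p/2}$ enters via $d^q\le d^{p/2}$. This second point is cosmetic --- carrying out the algebra would force the correct split --- but as written your description would assemble the wrong closed forms.
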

\begin{proof}
The number of terms that has $q$ common indices,
$q=0,1,\ldots,p$, between two cycles equals 
\begin{align*}
\binom{n}{2p-q}\binom{2p-q}{q}\binom{2p-2q}{p-q},
\end{align*}
which, according to Lemma~\ref{lem:tech1},  equals $\binom{n-p}{p-q}\binom{p}{q}\binom{n}{p}$. Therefore, Lemma~\ref{lem:bound_on_M} implies that the variance is upper bounded by, with $q=|\gamma|$,
\begin{align*}
&\binom{n}{p}^{-2}\left[\binom{n}{2p}\binom{2p}{p}\tr^2(\bS^p)+\binom{n}{2p-1}\binom{2p-1}{1}\binom{2p-2}{p-1}(2\tr(\bS^{2p})+\tr^2(\bS^p))\right.
\\ & \left.+\left(\sum_{ 2\le q\le p/2}d^{q-1}+ \sum_{p/2< q\le p}d^{q-2}\right)3^{q-1}\left(\frac{1-\left(\frac{2}{3d}\right)^{q-1}}{1-\frac{2}{3d}}\right)\frac{(d+2)^q-2^q}{d}\binom{n}{2p-q}\binom{2p-q}{q}\binom{2p-2q}{p-q}\tr^2(\bS^p)\right]-\tr^2(\bS^p),
\end{align*}
Now, looking at the terms for $q\ge 2$, using Lemma~\ref{lem:tech2}, and $\frac{1-\left(\frac{2}{3d}\right)^{q-1}}{1-\frac{2}{3d}}\le 2$ for $q\ge 2$ and $d\ge 1$,
\begin{align*}
&2\left(\sum_{ 2\le q< p/2}d^{q-1}+ \sum_{p/2\le q\le p}d^{q-2}\right)\binom{p}{q}3^{q-1}\left(\frac{p}{n}\right)^q\tr^2(\bS^p)
\\ =&
2\sum_{ 2\le q\le p}d^{q-2}\binom{p}{q}3^{q-1}\left(\frac{p}{n}\right)^q\tr^2(\bS^p)
+
2\sum_{ 2\le q< p/2}(d^{q-1}-d^{q-2})\binom{p}{q}3^{q-1}\left(\frac{p}{n}\right)^q\tr^2(\bS^p).
\end{align*}
The first term can be further bounded as,
\begin{align*}
2\sum_{ 2\le q\le p}d^{q-2}\binom{p}{q}3^{q-1}\left(\frac{p}{n}\right)^q\tr^2(\bS^p)
 \le&
\frac{2\tr^2(\bS^p)}{3d^2}\sum_{ 2\le q\le p}\binom{p}{q}\left(\frac{3pd}{n}\right)^q
\\
=&\frac{2\tr^2(\bS^p)}{3d^2}\left[\left(\frac{3pd}{n}+1\right)^p- \frac{3p^2d}{n}-1\right].
\end{align*}
The second term,
\begin{align*}
&2\sum_{ 2\le q < p/2}(d^{q-1}-d^{q-2})\binom{p}{q}3^{q-1}\left(\frac{p}{n}\right)^q\tr^2(\bS^p)
\\
=&
\frac{2(d-1)\tr^2(\bS^p)}{3d^2}\sum_{ 2\le q< p/2}\binom{p}{q}\left(\frac{3pd}{n}\right)^q
\\
\le&
\frac{2(d-1)\tr^2(\bS^p)}{3d^2}d^{p/2}\sum_{ 2\le q< p/2}\binom{p}{q}\left(\frac{3p}{n}\right)^q
\\
\le&
\frac{2(d-1)\tr^2(\bS^p)}{3d^2}d^{p/2}\left(\frac{3p}{n}\right)^{p/2}\sum_{ 2\le q< p/2}\binom{p}{q}
\\
\le&
\frac{2^p(d-1)\tr^2(\bS^p)}{3d^2}d^{p/2}\left(\frac{3p}{n}\right)^{p/2},
\end{align*}
where the last inequality follows immediately from the fact that $\sum_{ 2\le q< p/2}\binom{p}{q}\le 2^{p-1}$ due to the symmetry of the binomial coefficients. 
For $k=0$, we have, 
\begin{align*}
&\binom{n}{p}^{-2}\binom{n}{2p}\binom{2p}{p}\tr^2(\bS^p)=\frac{(n-p)!(n-p)!}{n!(n-2p)!}\tr^2(\bS^p)
=\prod_{k=0}^{p-1}\frac{n-k-p}{n-k}\tr^2(\bS^p)\,.
\end{align*}
$k=1$, we have,
\begin{align*}
&\binom{n}{p}^{-2}\binom{n}{2p-1}\binom{2p-1}{1}\binom{2p-2}{p-1}=\frac{(n-p)!(n-p)!}{n!(n-2p)!}\frac{p^2}{n-2p+1}.
\end{align*}

The upper bound follows. 
\end{proof}
\begin{lem}
\label{lem:tech1}
For non-negative integers $n\ge p\ge q$, we have,
\begin{align*}
\frac{\binom{n}{2p-q}\binom{2p-q}{q}\binom{2p-2q}{p-q}}{\binom{n}{p}}=&\binom{n-p}{p-q}\binom{p}{q}.
\end{align*}
\end{lem}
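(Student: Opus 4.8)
The plan is to prove this as a pure binomial-coefficient identity, and the cleanest conceptual route is a double-counting argument, with the algebraic expansion serving as a routine confirmation. First I would observe that both sides, after multiplying through by $\binom{n}{p}$, count the number of ordered pairs $(\sigma,\tau)$ of increasing $p$-cycles in $\pnCycles$ with exactly $q$ common indices --- which is precisely the combinatorial quantity this lemma is invoked to evaluate in the proof of Theorem~\ref{thm:upper_bound}. Since an increasing $p$-cycle is determined by its underlying $p$-element set, counting such pairs amounts to counting pairs of $p$-subsets of $[n]$ whose intersection has size $q$.

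The right-hand side $\binom{n}{p}\binom{n-p}{p-q}\binom{p}{q}$ realizes this count by first choosing $\sigma$ (giving $\binom{n}{p}$ ways), then selecting the $q$ shared indices from within $\sigma$ (giving $\binom{p}{q}$ ways), and finally choosing the remaining $p-q$ indices of $\tau$ from the $n-p$ indices outside $\sigma$ (giving $\binom{n-p}{p-q}$ ways). The left-hand-side numerator realizes the same count by first choosing the union $\sigma\cup\tau$, which necessarily has $|\sigma|+|\tau|-|\sigma\cap\tau|=2p-q$ elements (giving $\binom{n}{2p-q}$ ways), then marking the $q$ common indices within the union (giving $\binom{2p-q}{q}$ ways), and finally splitting the remaining $2p-2q$ indices --- each lying in exactly one cycle --- into the $p-q$ belonging to $\sigma$ and the $p-q$ belonging to $\tau$ (giving $\binom{2p-2q}{p-q}$ ways). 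Both procedures enumerate the same collection of pairs, so the two products are equal, which is the asserted identity.

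For a self-contained algebraic proof I would instead expand every binomial into factorials. In the product $\binom{n}{2p-q}\binom{2p-q}{q}\binom{2p-2q}{p-q}$ the factors $(2p-q)!$ and $(2p-2q)!$ cancel telescopically, leaving $\frac{n!}{(n-2p+q)!\,q!\,(p-q)!\,(p-q)!}$; dividing by $\binom{n}{p}=\frac{n!}{p!\,(n-p)!}$ yields $\frac{p!\,(n-p)!}{(n-2p+q)!\,q!\,(p-q)!\,(p-q)!}$, which is exactly the factorial expansion of $\binom{n-p}{p-q}\binom{p}{q}$. The only point requiring care is the boundary behaviour: when $n\ge 2p-q$ every factorial argument appearing is non-negative and the cancellation is valid, while when $n<2p-q$ the left-hand side vanishes through $\binom{n}{2p-q}=0$ and the right-hand side vanishes through $\binom{n-p}{p-q}=0$ (since then $n-p<p-q$), so the identity holds trivially. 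Beyond this bookkeeping there is no genuine obstacle, the result being an elementary factorial manipulation consistent with the convention $\binom{n}{m}=0$ for $n<m$.
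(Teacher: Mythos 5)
Your proposal is correct, and in fact it contains two complete proofs, one of which coincides with the paper's. The algebraic half is, line for line, the paper's own argument: the paper proves the lemma precisely by expanding all four binomial coefficients into factorials, cancelling $(2p-q)!$ and $(2p-2q)!$, and recognizing $\frac{(n-p)!\,p!}{(n-2p+q)!\,q!\,(p-q)!\,(p-q)!}$ as $\binom{n-p}{p-q}\binom{p}{q}$. What you add beyond the paper is twofold. First, the double-counting argument: both $\binom{n}{2p-q}\binom{2p-q}{q}\binom{2p-2q}{p-q}$ (choose the union, mark the common indices, split the remainder) and $\binom{n}{p}\binom{p}{q}\binom{n-p}{p-q}$ (choose $\sigma$, choose $\sigma\cap\tau$ inside it, complete $\tau$ outside it) count ordered pairs of $p$-subsets of $[n]$ meeting in exactly $q$ elements; this is not only a valid proof but also makes transparent why the identity is exactly what Theorem~\ref{thm:upper_bound} needs, namely the conversion of the count of pairs of increasing $p$-cycles sharing $q$ indices into the form $\binom{n-p}{p-q}\binom{p}{q}\binom{n}{p}$. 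Second, you handle the degenerate range $n<2p-q$, where the factorial $(n-2p+q)!$ appearing in the paper's computation is undefined; you correctly note that there both sides vanish under the convention $\binom{n}{m}=0$ for $n<m$ (the left side through $\binom{n}{2p-q}$, the right side through $\binom{n-p}{p-q}$, since $n-p<p-q$ exactly when $n<2p-q$), so the identity persists. The paper's proof is shorter but silently assumes $n\ge 2p-q$; your version is slightly more careful and, through the bijective reading, more informative.
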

\begin{proof}
\begin{align*}
\frac{\binom{n}{2p-q}\binom{2p-q}{q}\binom{2p-2q}{p-q}}{\binom{n}{p}}=& \frac{n!(2p-q)!(2p-2q)!(n-p)!p!}{(n-2p+q)!(2p-q)!(2p-2q)!q!(p-q)!(p-q)!n!}\\ =& \frac{(n-p)!p!}{(n-2p+q)!q!(p-q)!(p-q)!}=\binom{n-p}{p-q}\binom{p}{q}.
\end{align*}
\end{proof}
\begin{lem}
\label{lem:tech2}
For non-negative integers $n\ge q\ge p$, we have,
\begin{align*}
\frac{\binom{n-p}{p-q}}{\binom{n}{p}}\le\min \left\{
\begin{array}{l}
\left(\frac{n-p}{n-q}\right)^{p-q}\\
\left(\frac{p}{n}\right)^{q}\,.
\end{array}
\right.
\end{align*}
\end{lem}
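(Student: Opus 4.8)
The plan is first to fix the hypotheses: for $\binom{n-p}{p-q}$ to be a meaningful, nonzero quantity one needs $p-q\ge 0$, so the relevant range is $n\ge p\ge q\ge 0$ (the stated ordering $n\ge q\ge p$ should read this way), and I would prove the bound under this convention. The estimate reduces to comparing products of consecutive integers, so I would rewrite everything using falling factorials $(x)_k:=x(x-1)\cdots(x-k+1)$. Cancelling factorials directly gives
\[
\frac{\binom{n-p}{p-q}}{\binom{n}{p}}=\frac{(p)_q\,(n-p)_{p-q}}{(n)_p}.
\]

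The key step is the factorization $(n)_p=(n)_q\,(n-q)_{p-q}$, obtained by splitting the $p$ descending factors of $(n)_p$ into the first $q$ and the remaining $p-q$. This turns the ratio into a product of two manifestly controllable pieces,
\[
\frac{\binom{n-p}{p-q}}{\binom{n}{p}}=\underbrace{\frac{(p)_q}{(n)_q}}_{=:A}\cdot\underbrace{\frac{(n-p)_{p-q}}{(n-q)_{p-q}}}_{=:B},
\]
where $A=\prod_{j=0}^{q-1}\frac{p-j}{n-j}$ and $B=\prod_{j=0}^{p-q-1}\frac{n-p-j}{n-q-j}$. Every factor of $A$ and of $B$ lies in $(0,1]$ because $p\le n$ and $q\le p$, so $A\le 1$ and $B\le 1$; this already yields the crude bound $\le 1$, and the two sharper estimates come from pushing one of the factors harder.

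For the bound $(p/n)^q$ I would estimate $A$ termwise: $\frac{p-j}{n-j}\le\frac{p}{n}$ for every $0\le j\le q-1$, since cross-multiplying reduces it to $pj\le nj$, true as $p\le n$. Hence $A\le (p/n)^q$, and combining with $B\le 1$ gives the first bound. For the bound $\bigl(\frac{n-p}{n-q}\bigr)^{p-q}$ I would instead estimate $B$ termwise: $\frac{n-p-j}{n-q-j}\le\frac{n-p}{n-q}$ for every $0\le j\le p-q-1$, since cross-multiplying reduces it to $j(n-q)\ge j(n-p)$, true as $q\le p$. Hence $B\le\bigl(\frac{n-p}{n-q}\bigr)^{p-q}$, and combining with $A\le 1$ gives the second bound. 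Taking the minimum of the two finishes the argument.

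The only genuinely substantive move is spotting the factorization into $A\cdot B$; after that, both estimates rest on the same elementary monotonicity fact, namely that subtracting equal amounts from the numerator and denominator of a proper fraction does not increase it. I therefore do not expect a real obstacle here — the remaining content is just the two one-line cross-multiplications above. The only thing to check with any care is the degenerate cases $q=0$ (where $A$ is the empty product $1$ and the $(p/n)^q$ bound is the trivial $\le 1$) and $q=p$ (where $B$ is empty and the second bound is trivial), both of which are handled automatically by the empty-product convention.
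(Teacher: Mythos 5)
Your proof is correct and follows essentially the same route as the paper's: both cancel the factorials into falling-factorial form, split $(n)_p$ into the first $q$ and last $p-q$ factors to obtain the product $\frac{(p)_q}{(n)_q}\cdot\frac{(n-p)_{p-q}}{(n-q)_{p-q}}$, and bound each factor termwise (the paper bounds both factors at once and then passes to the minimum since each is at most $1$, which is the same observation you make). Your explicit handling of the typo in the hypothesis ($n\ge p\ge q$, not $n\ge q\ge p$) and of the degenerate cases $q=0$, $q=p$ is a welcome tightening but not a different argument.
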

\begin{proof}
\begin{align*}
\frac{\binom{n-p}{p-q}}{\binom{n}{p}} = &\frac{(n-p)(n-p-1)\cdots (n-2p-q+1)\times p!}{(p-q)!\times n(n-1)\cdots (n-p+1)}
\\=& \frac{(n-p)(n-p-1)\cdots (n-2p-q+1)\times p(p-1)\cdots (p-q+1)}{n(n-1)\cdots (n-p+1)}
\\=& \frac{(n-p)(n-p-1)\cdots (n-2p-q+1)}{(n-q)(n-q-1)\cdots (n-p+1)}\times \frac{p(p-1)\cdots (p-q+1)}{n(n-1)\cdots (n-q+1)}
\\
\le & \left(\frac{n-p}{n-q}\right)^{p-q}\cdot \left(\frac{p}{n}\right)^{q}
\le\min \left\{
\begin{array}{l}
\left(\frac{n-p}{n-q}\right)^{p-q}\\
\left(\frac{p}{n}\right)^{q}.
\end{array}
\right.
\end{align*}
\end{proof}

\bibliographystyle{plain}

\end{document}